\def\adm {\textnormal{adm}}
\def\nadm {\textnormal{nonadm}}
\def\nmin   {n_{\min}^\H}
\def\nminH   {n_{\min}^{\H^2}}
\def\H      {\mathcal{H}}
\def\deta {\textnormal{det}\,}
\renewcommand{\L}[1]{{\op{L}(#1)}}
\begin{document}
\title{Kernel-independent adaptive construction of $\H^2$-matrix approximations}
\author{M.\ Bauer, M.\ Bebendorf, and B.\ Feist\footnote{Faculty of Mathematics, Physics and Computer Science, Universit\"at Bayreuth, 95447 Bayreuth, Germany}}
\date{\today}
\maketitle

\begin{abstract}
A method for the kernel-independent construction of $\H^2$-matrix approximations to non-local operators is proposed. 
Special attention is paid to the adaptive construction of nested bases. As a side result, new error
estimates for adaptive cross approximation~(ACA) are presented which have implications on the pivoting strategy of ACA.
\end{abstract}

\textbf{Keywords:} non-local operators, adaptive cross approximation, $\mathcal{H}^2$-matrices, interpolation

\section{Introduction}
The fast multipole method introduced by Greengard and Rokhlin (see \cite{MR86k:65120,MR88k:82007}) has become a very popular method for the
efficient evaluation of long-range potentials and forces in the $n$-body problem. In a SIAM News article~\cite{ci00} it has been
named to be one of the top 10 algorithms of the 20th century. While in the initial publications two-dimensional
electrostatic problems were investigated, later publications~\cite{MR99c:65012,MR2000h:65178} have improved the method such that
three-dimensional electrostatic problems and also problems with more general physical background can be treated efficiently.
All these variants rely on explicit kernel expansions, which on the one hand allows to tailor the expansion
tightly to the respective problem, but on the other hand requires
its own analytic apparatus including a-priori error estimates for each kernel. In order to overcome this
technical difficulty, kernel-independent generalizations~\cite{MR2054351} were introduced. While the latter keep the analytic point of
view, $\H$- and $\H^2$-matrices (see~\cite{MR2000c:65039,MR2001i:65053,HAKHSA00}) generalize the method as much as possible by an algebraic
perspective. In addition to the $n$-body problem, the latter methods can be applied to general elliptic boundary value
problems either in its differential or its integral representation; see \cite{Bebendorf:2008,WH15}. Furthermore,
approximate replacements of usual matrix operations such as addition, multiplication, and inversion can be carried out
with logarithmic-linear complexity, which allows to construct preconditioners in a fairly automatic way.

Nevertheless, $\H^2$-matrix approximations cannot be constructed without taking into account the analytic background.
For instance, the construction of suitable cluster bases is a crucial task.
In order to guarantee as much universality of the method as possible, polynomial spaces
are frequently used; see \cite{BOLOME02}. While this choice is quite convenient due to special properties of polynomials,
it is usually not the most efficient approach. To see why, keep in mind that the three-dimensional approach based on
spherical harmonics~\cite{MR2000h:65178} requires $k=\ord{p^2}$ terms in a truncated expansion with precision of order~$p$,
while the number of polynomial terms for the same order of precision requires $k=\ord{p^3}$ terms.

The number of terms $k$ required to achieve a prescribed accuracy is crucial for the overall efficiency of the method.
In addition to its dependence on the kernel, this number also depends on the underlying geometry (local patches of the geometry may have a smaller dimension).
Additionally, a-priori error estimates usually lead to an overestimation of~$k$. It is therefore helpful to
find $k$ in an automatic way, i.e.\ by an adaptive procedure. Such a method has been introduced by one of the authors.
The \emph{adaptive cross approximation}~(ACA)~\cite{MR2001j:65022} computes
low-rank approximations of suitable sub-blocks using only few of the original matrix entries. 
From the algorithmic point of view this procedure is similar to a rank-revealing LU factorization. Therefore, it is kernel 
independent. In addition to that, it provably achieves asymptotic optimal convergence rates.

The aim of this article is to generalize the adaptive cross approximation method,
which was introduced for $\mathcal{H}$-matrices, to the kernel-independent construction of $\H^2$-matrices for
matrices $A\in\R^{M\times N}$ with entries of the form
\begin{equation}\label{eq:matA}
a_{ij}=\int_\Omega\int_\Omega K(x,y)\fie_i(x)\psi_j(y)\ud y\ud x,\quad i=1,\dots,M,\; j=1,\dots,N.
\end{equation}
Here, $\fie_i$ and $\psi_j$ denote locally supported ansatz and test functions. The kernel function~$K$ is of the type
\begin{equation}\label{eq:matA1}
  K(x,y)=\xi(x)\,\zeta(y) \, f(x,y)
\end{equation}
with a singular function~$f(x,y)=|x-y|^{-\alpha}$ and functions~$\xi$ and~$\zeta$ each depending on only one of the variables~$x$ and~$y$.  Such matrices result, for instance, from a Galerkin
discretization of integral operators. In particular, this
includes the single layer potential operator
$K(x,y)=|x-y|^{-1}$ and the double layer potential operator of the Laplacian in $\R^3$ for which $K(x,y)=\frac{(x-y)\cdot n_y}{|x-y|^3}=\frac{x\cdot n_y}{|x-y|^3}-\frac{y\cdot n_y}{|x-y|^3}$.
Note that collocation methods and Nystrom methods can also be included by formally choosing
$\fie_i=\delta_{x_i}$ or $\psi_j=\delta_{x_j}$, where $\delta_x$ denotes the Dirac distribution centered at~$x$.
In contrast to $\H$-matrices for which the method is applied to blocks, in the case of $\H^2$-matrices
cluster bases have to be constructed. If this is to be done adaptively, special properties of the kernel have to
be exploited, in order to be able to guarantee that the error is controlled also outside of the cluster. Our approach relies on the
harmonicity of the singular part~$f$ of the kernel function~$K$. This article
also presents a-priori error estimates which are based on interpolation by radial basis functions.
The advantage of these new results is that they pave the way to a new pivoting strategy of ACA.
While results based on polynomial interpolation error estimates require that the pivots are chosen such that
unisolvency of the polynomial interpolation problem is guaranteed, the new estimates show that only the fill distance
of pivoting points is crucial for the convergence of ACA.

The article is organized as follows. In the next Sect.~\ref{sec:hiqr} we construct interpolants~$s_k$ to kernels~$f$ which are
harmonic with respect to one variable. The system of functions in which the interpolating function is constructed will be defined from
restrictions of~$f$. This construction guarantees that the harmonicity of~$f$ is preserved for its interpolation error.
Hence, in order to achieve a prescribed accuracy in the exterior of a domain, it is sufficient to check it on its
boundary. This allows to construct $s_k$ in a kernel-independent and adaptive way.
The interpolating function~$s_k$ is then used to construct a quadrature rule which will be used in the construction of nested bases.
Sect.~\ref{sec:three} presents error estimates for functions $\e^{-\gamma|x|}$
based on radial basis functions. These results are used in Sect.~\ref{sec:appl} to derive exponential error estimates (via exponential sum approximation)
for $s_k$ when interpolating~$f(x,y)=|x-y|^{-\alpha}$ for arbitrary $\alpha>0$.
The goal of Sect.~\ref{sec:H2} is the construction of uniform $\H$- and $\H^2$-matrix approximations to matrices~\eqref{eq:matA} using the harmonic interpolants~$s_k$.
In Sect.~\ref{sec:num} we apply the new method to boundary integral formulations of Poisson boundary value
problems and to fractional diffusion problems and present numerical results which validate the presented method.


\section{Harmonic interpolants and quadrature rules} \label{sec:hiqr}
For the construction of $\H^2$-matrix approximations (see Sect.~\ref{sec:H2}), quadrature rules for the computation of integrals
\[
\int_X f(x,y)\ud x  
\]
will be required which depend only on the domain of integration~$X\subset\R^d$ and which are valid in the whole far-field of $X$, i.e.\ for $y\in \op{F}_\eta(X)$, where
\[
\op{F}_\eta(X):=\{y\in \R^d:\eta\,\dist(y,X)\geq \diam\,X\}
\]
with given $\eta>0$. Such quadrature formulas are usually based on polynomial interpolation
together with a-priori error estimates. The aim of this section is to introduce new adaptive
quadrature formulas which are controlled by a-posteriori error estimates.
In the special situation that $f(x,\cdot)$, $x\in X$, is harmonic in \[X^c:=\R^d\setminus \overline{X}\]
and vanishes at infinity it is possible to control the quadrature error for $y\in \op{F}_\eta(X)$ also computationally.
Notice that $f(x,y)=|x-y|^{-\alpha}$ is harmonic in $\R^d$, $d\geq3$, only for $\alpha=d-2$. Applying the following arguments in $\R^{d'+2}$, one can also treat the case $\alpha=d'$ for arbitrary $d'\in\N$.
Fractional exponents, which appear for instance in the case of the fractional Laplacian, will be treated in a forthcoming article.

Harmonic functions $u:\Omega\to\R$ in an unbounded domain $\Omega\subset\R^d$ are known to satisfy the mean value property
\[
u(x)=\frac{1}{|B_r|}\int_{B_r(x)} u(y)\ud y  
\]
for balls $B_r(x)\subset \Omega$ and the maximum principle
\[
\max_\Omega |u|\leq \max_{\partial \Omega} |u|  
\]
provided $u$ vanishes at infinity.

Let $\Sigma\subset\R^d$ be an unbounded domain such that (see Figure~\ref{fig:Sigma})
\begin{equation}\label{eq:ballxi}
\Sigma \supset  \op{F}_{\eta}(X)\quad \text{and}\quad \partial \Sigma\subset \op{F}_{2\eta}(X).
\end{equation}
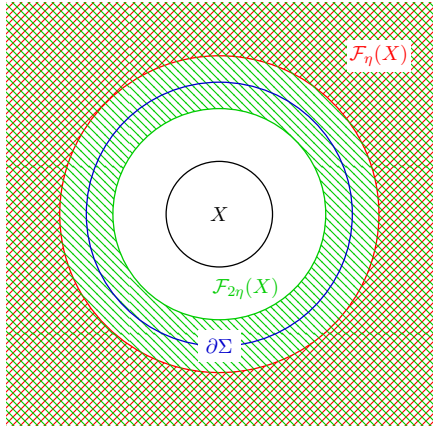
\begin{figure}[htb]
\begin{center}
\scalebox{0.7}{
\begin{tikzpicture} 
\fill [pattern = north east lines, pattern color=red] (-4,-4) rectangle (4,4);
\draw[line width=0.7pt, red, fill = white] (0,0) circle (3cm);
\fill [pattern = north west lines, pattern color=green!80!black] (-4,-4) rectangle (4,4);
\draw[line width=0.7pt, green!80!black, fill=white] (0,0) circle (2cm);
\draw[line width=0.7pt, blue!80!black] (0,0) circle (2.5cm);
\draw[line width=0.7pt, fill=white] (0,0) circle (1cm);
\node[rectangle] at (0,0) {$X$};
\fill [white] (2.4,2.7) rectangle (3.6,3.3);
\node[rectangle, red] at (3,3) {$\mathcal{F}_\eta(X)$};
\fill [white] (-0.4,-2.8) rectangle (0.4,-2.2);
\node[rectangle, blue!80!black] at (0,-2.5) {$\partial \Sigma$};
\node[rectangle, green!80!black] at (0.5,-1.4) {$\mathcal{F}_{2\eta}(X)$};
\end{tikzpicture}
}
\end{center}
\caption{$\Sigma$ and the far-fields $\op{F}_{2\eta}(X)$ and $\op{F}_\eta(X)$.} \label{fig:Sigma}
\end{figure}
A natural choice is $\Sigma=\op{F}_\eta(X)$. 
Since our aim is to check the actual accuracy and we cannot afford to inspect it on an infinite set,
we introduce the finite set $M\subset \partial \Sigma$ to be close to $\partial\Sigma$, i.e., we assume that $M$
satisfies
\begin{equation}\label{eq:Ydicht}
  \dist(y,M)\leq\delta,\quad y\in \partial \Sigma.
\end{equation}
In \cite{Bebendorf:2008} we have already used the following recursive definition for the construction of
an interpolating function~$s_k$ in the convergence analysis of the \textit{adaptive cross approximation}~\cite{MR2001j:65022}. 
Let $r_0=f$ and for $k=0,1,2,\dots$ assume that $r_k$ has already been defined. Let
$x_{k+1}\in X$ be chosen such that 
\begin{equation} \label{eq:unisol}
 r_k(x_{k+1},\cdot)\neq 0 \quad \textnormal{in } M,
\end{equation}
then set
\begin{equation}\label{eq:defrk}
r_{k+1}(x,y):=r_k(x,y)-\frac{r_k(x_{k+1},y)}{r_k(x_{k+1},y_{k+1})}\,r_k(x,y_{k+1})
\end{equation}
and $s_{k+1}:=f-r_{k+1}$, where $y_{k+1}\in M$ denotes the maximum of $|r_k(x_{k+1},\cdot)|$ in $M$.

It can be shown (see \cite{Bebendorf:2008}) that $s_k$ interpolates~$f$ at the chosen nodes~$x_i$, $i=1,\dots,k$, for all $y\in \op{F}_\eta(X)$, i.e.,
\[s_k(x_i,y)=f(x_i,y),\quad i=1,\dots,k,\]
and belongs to $F_k:=\spann\{f(\cdot,y_1),\dots,f(\cdot,y_k)\}$.
In addition, the choice of $(x_k,y_k)\in X\times M$ guarantees unisolvency, which can be seen from
\[
\deta C_k=r_0(x_1,y_1)\cdot\ldots\cdot r_{k-1}(x_k,y_k)\neq 0,
\]
where $C_k\in\R^{k\times k}$ denotes the matrix with the entries $(C_k)_{ij}=f(x_i,y_j)$, $i,j=1,\dots,k$.
Hence, one can define the Lagrange functions for the system 
and the nodes $x_i$, i.e.\ $L^{(j)}_k(x_i)=\delta_{ij}$, $i,j=1,\dots,k$, as
\[
L_k^{(i)}(x):=\frac{\deta C^{(i)}_k(x)}{\deta C_k}\in F_k,\quad i=1,\dots,k,
\]
where $C^{(i)}_k(x)\in\R^{k\times k}$ results from $C_k$ by replacing its $i$-th row with the
vector \[v_k(x):=\begin{bmatrix}f(x,y_1)\\ \vdots\\f(x,y_k)\end{bmatrix}.\]
Another representation of the vector $L_k\in\R^k$ of Lagrange functions $L_k^{(i)}$ is
\begin{equation}\label{eq:Lagf}
L_k(x)=C_k^{-T}v_k(x).
\end{equation}
Due to the uniqueness of the interpolation, $s_k$ has the representation 
\begin{equation}\label{eq:repsk}
s_k(x,y)=\sum_{i=1}^k f(x_i,y) L^{(i)}_k(x)=v_k(x)^TC_k^{-1}w_k(y),
\end{equation}
where $w_k(y):=[f(x_1,y),\dots,f(x_k,y)]^T$.

For an adaptive procedure it remains to control the interpolation error $f-s_k=r_k$ in $X\times \op{F}_\eta(X)$.
The following obvious property follows from \eqref{eq:defrk} via induction.
\begin{lemma}\label{lem:one}
  If $f(x,\cdot)$ is harmonic in $X^c$ and vanishes at infinity for all $x\in X$, then so do $s_k(x,\cdot)$ and~$r_k(x,\cdot)$.
\end{lemma}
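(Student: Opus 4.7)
The plan is a direct induction on $k$, as the hint suggests. The entire point is to view the right-hand side of~\eqref{eq:defrk}, for fixed $x$, as a linear combination in the variable $y$ of two functions that are already known to satisfy the required property.

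First, I would set up the base case. By definition $r_0=f$, and the hypothesis of the lemma is precisely that $r_0(x,\cdot)$ is harmonic in $X^c$ and vanishes at infinity for every $x\in X$; the corresponding statement for $s_0\equiv 0$ is trivial.

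For the inductive step, suppose $r_k(x,\cdot)$ is harmonic in $X^c$ and vanishes at infinity for all $x\in X$. I would rewrite \eqref{eq:defrk} as
\begin{equation*}
r_{k+1}(x,y) = r_k(x,y) - \alpha(x)\,r_k(x_{k+1},y),\qquad \alpha(x):=\frac{r_k(x,y_{k+1})}{r_k(x_{k+1},y_{k+1})},
\end{equation*}
noting that the denominator is nonzero by the pivot choice~\eqref{eq:unisol} and that $\alpha(x)$ does not depend on $y$. For fixed $x\in X$, the map $y\mapsto r_{k+1}(x,y)$ is therefore an $\R$-linear combination of $y\mapsto r_k(x,y)$ and $y\mapsto r_k(x_{k+1},y)$. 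Since linear combinations of harmonic functions on $X^c$ are harmonic, and since sums of functions vanishing at infinity vanish at infinity, $r_{k+1}(x,\cdot)$ inherits both properties. The statement for $s_{k+1}=f-r_{k+1}$ then follows immediately, because it is the difference of two functions which are harmonic in $X^c$ and vanish at infinity.

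There is no real obstacle here; the only thing to be careful about is keeping the roles of $x$ and $y$ separate, i.e.\ remembering that $\alpha(x)$ and $r_k(x,y_{k+1})$ are scalars with respect to the $y$-variable in which harmonicity is being checked. The argument uses neither the specific form $f(x,y)=|x-y|^{-\alpha}$ nor any property of the test set $M$ beyond the definiteness condition~\eqref{eq:unisol}.
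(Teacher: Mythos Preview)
Your proposal is correct and is exactly the induction via~\eqref{eq:defrk} that the paper invokes (the paper states only that the result ``follows from~\eqref{eq:defrk} via induction'' without spelling out details). You have simply filled in the straightforward verification that the paper leaves to the reader.
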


The following lemma shows that although $M\subset\partial\Sigma$ is a finite set, it can be used to find an upper bound on the
maximum of $r_k(x,\cdot)$ in the unbounded domain $\op{F}_\eta(X)$.

\begin{lemma}\label{lem:two}
Let the assumptions of Lemma~\ref{lem:one} be valid and let $2q\eta\,\delta<\diam\,X$, where $q=(\sqrt[d]{2}-1)^{-1}+2$.
Then there is $c_k>0$ such that for $x\in X$ it holds
\[\max_{y\in \op{F}_\eta(X)} |f(x,y)-s_k(x,y)|
\leq 2\max_{y\in M} |f(x,y)-s_k(x,y)|+c_kq\delta,\]
where $c_k:=\norm{\nabla_y r_k(x,\cdot)}_\infty$.
\end{lemma}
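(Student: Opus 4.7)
The plan is to combine the maximum principle for harmonic functions vanishing at infinity with a mean-value ``volume doubling'' trick, in order to transfer the supremum from the unbounded far-field $\op{F}_\eta(X)$ down to the finite set~$M\subset\partial\Sigma$.

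First, I would invoke Lemma~\ref{lem:one}: $r_k(x,\cdot)=f(x,\cdot)-s_k(x,\cdot)$ is harmonic in $X^c$ and vanishes at infinity. Since $\overline{\Sigma}\subset X^c$ (its boundary lies in $\op{F}_{2\eta}(X)\subset X^c$) and $\op{F}_\eta(X)\subset\Sigma$ by~\eqref{eq:ballxi}, the maximum principle on the unbounded domain $\Sigma$ gives
\[
 \max_{y\in\op{F}_\eta(X)}|r_k(x,y)|\;\leq\;\max_{y\in\overline\Sigma}|r_k(x,y)|\;=\;\max_{y\in\partial\Sigma}|r_k(x,y)|.
\]
Pick $y^\ast\in\partial\Sigma$ attaining the right-hand side. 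Because $y^\ast\in\op{F}_{2\eta}(X)$ we have $\dist(y^\ast,X)\geq\diam X/(2\eta)$; the hypothesis $2q\eta\delta<\diam X$ therefore gives $q\delta<\dist(y^\ast,X)$, so the closed ball $\overline{B_{q\delta}(y^\ast)}$ lies in $X^c$ and $r_k(x,\cdot)$ is harmonic on it.

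Next, I would apply the mean-value property of the harmonic function $r_k(x,\cdot)$ on two concentric balls $B_{R_1}(y^\ast)\subset B_{R_2}(y^\ast)\subset\overline{B_{q\delta}(y^\ast)}$ with
\[
 R_1:=(\sqrt[d]{2}-1)^{-1}\delta,\qquad R_2:=\sqrt[d]{2}\,R_1,
\]
so that $|B_{R_2}|=2|B_{R_1}|$, $R_2=R_1+\delta$, and $R_2+\delta=q\delta$. Subtracting the two mean-value identities rewrites
\[
 r_k(x,y^\ast)\;=\;\frac{1}{|B_{R_1}|}\int_{B_{R_2}(y^\ast)\setminus B_{R_1}(y^\ast)} r_k(x,z)\,\ud z,
\]
and the annulus has volume $|B_{R_1}|$. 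By \eqref{eq:Ydicht} I pick $y'\in M$ with $|y^\ast-y'|\leq\delta$; for any $z$ in the annulus the triangle inequality gives $|z-y'|\leq R_2+\delta=q\delta$, and along the segment from $z$ to $y'$ (which stays in $X^c$ by the positioning of the ball) the gradient bound yields $|r_k(x,z)|\leq|r_k(x,y')|+c_k q\delta\leq\max_{M}|r_k(x,\cdot)|+c_k q\delta$. Taking absolute values in the annular average then produces an estimate of the required shape.

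The main obstacle is to arrive at the stated factor~$2$ in front of $\max_M|r_k(x,\cdot)|$: the direct chain sketched above delivers only factor~$1$, so a more refined accounting is needed. I expect this factor to emerge from using the comparison of the two mean-value identities asymmetrically (e.g., bounding the integral over $B_{R_1}$ by $|r_k(x,y^\ast)|\,|B_{R_1}|$ and the integral over the annulus by $(\max_M|r_k(x,\cdot)|+c_k q\delta)\,|B_{R_1}|$, then inserting both into the mean-value identity on $B_{R_2}$) so that the doubling of volumes transfers to a factor~$2$ on the $M$-maximum, with the constant $q=(\sqrt[d]{2}-1)^{-1}+2$ collecting the inner radius~$R_1$, the annulus thickness~$\delta$, and the density offset~$\delta$. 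Verifying that the gradient bound remains valid along every auxiliary segment is the main technical point.
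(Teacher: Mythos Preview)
Your annular mean-value identity is correct and already proves more than you need: averaging your pointwise bound $|r_k(x,z)|\leq |r_k(x,y')|+c_kq\delta$ over the annulus (whose volume is exactly $|B_{R_1}|$) yields $|r_k(x,y^\ast)|\leq \max_{y\in M}|r_k(x,y)|+c_kq\delta$, that is, the stated estimate with factor~$1$ instead of~$2$. Your final paragraph, searching for a mechanism that produces the factor~$2$, is therefore unnecessary --- the lemma follows a~fortiori. (In fact even the annular device is superfluous: the bare gradient bound along the segment from $y^\ast$ to $y'\in M$, which has length at most~$\delta$ and lies in $B_\delta(y^\ast)\subset X^c$, already gives $|r_k(x,y^\ast)|\leq \max_{y\in M}|r_k(x,y)|+c_k\delta$.)

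The paper's argument is genuinely different. It splits on whether $r_k(x,\cdot)$ has a zero in $B_{q\delta}(y)$. If it does, the gradient bound from that zero gives $|r_k(x,y)|\leq c_kq\delta$ outright. If it does not, then $r_k(x,\cdot)$ has constant sign on the ball, so $|r_k(x,\cdot)|$ is itself harmonic there, and a Harnack-type comparison of mean values over the nested balls $B_{(q-2)\delta}(y)\subset B_{(q-1)\delta}(y')\subset B_{q\delta}(y)$ (note the shifted centre~$y'$) yields $|r_k(x,y)|\leq\bigl(\tfrac{q-1}{q-2}\bigr)^{d}|r_k(x,y')|=2|r_k(x,y')|$. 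The factor~$2$ and the specific value $q=(\sqrt[d]{2}-1)^{-1}+2$ are dictated precisely by this Harnack step. The paper's route has the conceptual advantage that in the sign-definite case it delivers a purely multiplicative bound with no gradient term at all, which is what one wants if $c_k$ is regarded as hard to control; your route is shorter and yields a sharper additive constant, at the price of the gradient term always being present.
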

\begin{proof}
  Let $x\in X$ and $y\in \partial \Sigma$. 
  We define the set \[N:=\{z\in B_{q\delta}(y):r_k(x,z)=0\}\] of zeros in $B_{q\delta}(y)$.
  If $N\neq\emptyset$ then with $z\in N$
  \[
    |r_k(x,y)|= |\int_0^1 (y-z)\cdot\nabla_y r_k(x,z+t(y-z))\ud t|\leq c_kq\delta.
  \]
In the other case $N=\emptyset$, our aim is to find $y'\in M$ such that $|r_k(x,y)|\leq 2|r_k(x,y')|$. $r_k$ does not change its sign and is harmonic
in $B_{q\delta}(y)$ due to $B_{q\delta}(y)\subset X^c$, which follows from \eqref{eq:ballxi} as
  \[
    2\eta\,\dist(B_{q\delta}(y),X)\geq 2\eta\,\dist(y,X)-2\eta q\delta\geq \diam\,X-2\eta q\delta>0.
    \]
Due to the assumption \eqref{eq:Ydicht} we can find $y'\in B_\delta(y)\cap M$.
Then $B_{(q-2)\delta}(y)\subset B_{(q-1)\delta}(y')\subset B_{q\delta}(y)$.
Hence, the mean value property (applied to $r_k$ if $r_k$ is positive or to $-r_k$ if $r_k$ is negative) shows
\begin{align*}
|r_k(x,y)|&=\frac{1}{|B_{(q-2)\delta}|}\int_{B_{(q-2)\delta}(y)} |r_k(x,z)|\ud z
\leq \frac{1}{|B_{(q-2)\delta}|}\int_{B_{(q-1)\delta}(y')} |r_k(x,z)|\ud z\\
&=\frac{|B_{(q-1)\delta}|}{|B_{(q-2)\delta}|}|r_k(x,y')|
=\left(\frac{q-1}{q-2}\right)^d|r_k(x,y')|
=2|r_k(x,y')|.
\end{align*}
Sine $r_k$ vanishes at infinity, \eqref{eq:ballxi} together with the maximum principle shows
\[
  \max_{y\in \op{F}_\eta(X)} |r_k(x,y)|\leq \max_{y\in \Sigma} |r_k(x,y)|\leq \max_{y\in \partial \Sigma} |r_k(x,y)|
\leq 2\max_{y'\in M} |r_k(x,y')|+c_kq\delta.
\]
\end{proof}

Notice that due to \eqref{eq:repsk} we have
\[
\nabla_y r_k(x,y) =\nabla_y f(x,y)-\nabla_y s_k(x,y)=\nabla_y f(x,y)-\sum_{i=1}^k L_k^{(i)}(x)\nabla_y f(x_i,y).
\]
Hence, \[c_k=\norm{\nabla_y r_k(x,\cdot)}_\infty\leq (1+\Lambda_k)\max_{x\in X}\norm{\nabla_y f(x,\cdot)}_\infty\]
with the Lebesgue constant $\Lambda_k(x):=\sum_{i=1}^k|L_k^{(i)}(x)|$.
Although it seems that $\Lambda_k(x)\sim k$ in practice, there is no proof for this observation up to now.
A related topic in interpolation theory are \textit{Leja points}; see \cite{MR0100726}.

To see that this special kind of interpolation is more efficient than polynomial interpolation,
we present the following example.
\begin{example}
Let $X\subset\R^3$ be 1000 points forming a uniform mesh of the unit cube centered at the origin. We choose $\Sigma=\{x\in\R^3:|x|>3\}$.
$M$ is a discretization of $\partial\Sigma$ with 768~points. We consider $f(x,y)=|x-y|^{-1}$
and compare the quality of $s_k$ with the quality of the interpolating tensor Chebyshev  polynomial of degree~$k$.
The following table shows the maximum pointwise error measured at $X$ and at three times as many points as $M$ has.
\begin{table}[htb]\centering
\begin{tabular}{c|ccccc}
 $k$  & 1 & 8 & 27 & 64 & 125 \\ \hline 
  Cross approximation & 3.28e-1 & 5.90e-2 & 5.8e-3 & 2.22e-4 & 1.12e-5\\
  Chebyshev interpolation & 4.55e-1 & 8.73e-2 & 2.18e-2 & 5.72e-3 & 2.10e-3 
\end{tabular}
\caption{Approximation error of $s_k$ and tensor Chebyshev polynomial of degree $k$.}
\end{table}
\end{example}

\subsection{Exponential error estimates for multivariate interpolation}\label{sec:three}
For analyzing the error of the cross approximation, the remainder $r_k$ has to be estimated. The proof in~\cite{Bebendorf:2008} establishes a connection
of $r_k$ with the best approximation in an arbitrary system~$\Xi = \{ \xi_1, \dots, \xi_k\}$ of functions. There, qualitative estimates are presented for a polynomial
system $\Xi$. For the uniqueness of polynomial interpolation it has to be assumed that the 
Vandermonde matrix $[\xi_j(x_i)]_{ij} \in \R^{k \times k}$ is non-singular.
The goal of the following section is to provide new error estimates for the convergence of cross
approximation which avoid the unisolvency assumption by employing radial basis function 
interpolation. Furthermore, we will be able to state a rule for choosing the next pivotal point $x_k$ (in addition to \eqref{eq:unisol})
leading to fast convergence rates.

Let $\kappa:\R^d\to\R$ be a continuous function. In the following we assume that $\kappa$ is positive definite,
i.e. 
\[
\int_{\R^d\times\R^d}\kappa(x-y)\fie(x)\overline{\fie(y)}\ud x\ud y>0  
\]
for all $0\neq \fie\in C_0^\infty(\R^d)$.
The Fourier transform of such functions determines a measure $\mu$ on $\R^d\setminus\{0\}$ such that
\[
\int\kappa(x)\fie(x)\ud x=\int\hat\fie(\xi)\ud\mu(\xi),\quad \fie\in C_0^\infty(\R^d).
\]
Following~\cite{mn92} we define $\mathscr{C}_\kappa$ the set of continuous functions~$f$ satisfying
\begin{equation}\label{eq:defCk}
(f,\fie)_{L^2}^2\leq c^2\int_{\R^d\times\R^d} \kappa(x-y)\,\fie(x)\,\overline{\fie(y)}\ud x\ud y
\end{equation}
for some constant $c>0$ and all $\fie\in C_0^\infty(\R^d)$. The smallest constant~$c$ in~\eqref{eq:defCk}
defines a norm~$\norm{f}_\kappa$ and $\mathscr{C}_k$ is a Hilbert space. 

Given a set $X_k := \{x_1,\dots,x_k\}\subset X$ consisting of $k \in \N$ nodes~$x_j$, an interpolant 
 $p \in \textnormal{span}\{\kappa(\cdot-x_j),\,j=1,\dots,k\}$ has to fulfill the conditions \[p(x_j) = f(x_j), \quad j = 1,\dots,k.\]
A solution of this interpolation problem can be written in its Lagrangian form
\begin{equation*}
 p(x) := \sum_{i = 1}^k f(x_i) \, L^{\kappa}_{i}(x),
\end{equation*}
where $L^{\kappa}_{i}(x) = \sum_{j = 1}^{k} \alpha_j^{(i)} \kappa(x-x_j)$ denote the Lagrange functions 
satisfying $L^{\kappa}_j(x_i) = \delta_{ij}$, i.e., its coefficients $\alpha^{(i)}\in\R^k$ are defined as the solution of the
linear systems of equations $A\alpha^{(i)} = e_i$ with $A := [\kappa(x_i-x_j)]_{ij}\in\R^{k\times k}$.
The error between a function $f\in\mathscr{C}_\kappa$
and its interpolant~$p$ is typically measured in terms of the \emph{fill distance}
\begin{equation*}
 h_{X_k,X} := \sup_{x \in X} \dist(x,X_k).
\end{equation*}
The following result is proved in \cite{mn92}.
\begin{theorem} \label{thm:RBFconv}
Let $X$ be a cube of side $b_0$.  Suppose that $\mu$ satisfies
  \begin{equation}\label{eq:mudec}
    \int |\xi|^k\ud\mu(\xi)\leq \rho^k k!,\quad k\in\N,
  \end{equation}
  for some $\rho>0$. Then there is $0<\lambda<1$ such that for all $f\in\mathscr{C}_\kappa$ the corresponding interpolant~$p$
 satisfies
 \[
   |f(x)-p(x)|\leq \lambda^{1/h_{X,X_k}}\norm{f}_\kappa
   \]
   for all $x\in X$.
\end{theorem}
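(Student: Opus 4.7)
The strategy is the classical Madych--Nelson power function argument. Since $\mathscr{C}_\kappa$ is a reproducing kernel Hilbert space with reproducing kernel $(x,y)\mapsto\kappa(x-y)$ and the interpolation map $f\mapsto p$ is the orthogonal projection onto $\spann\{\kappa(\cdot-x_j),\,j=1,\dots,k\}$, the pointwise error can be estimated by
\[
|f(x)-p(x)|\leq P_{X_k}(x)\,\norm{f}_\kappa,
\]
where $P_{X_k}(x)$ is the so-called \emph{power function}, defined as the norm in $\mathscr{C}_\kappa$ of the residual Riesz representer of point evaluation. Exploiting the Fourier representation $\kappa(z)=\int\e^{\i z\cdot\xi}\ud\mu(\xi)$ together with the minimality of $P_{X_k}(x)$ against all coefficient vectors reproducing $\kappa(\cdot-x)$ approximately, one obtains
\[
P_{X_k}(x)^2\leq\int_{\R^d}\Bigl|1-\sum_{j=1}^k a_j\,\e^{\i(x_j-x)\cdot\xi}\Bigr|^2\ud\mu(\xi)
\]
for any coefficients $a\in\R^k$. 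It therefore suffices to construct coefficients making the right-hand side decay exponentially in $1/h_{X_k,X}$.

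The next step is to build such coefficients from \emph{local polynomial interpolation}. Given $x\in X$ and a parameter $m\in\N$, I would select $N_m=\binom{m+d-1}{d}$ nodes of $X_k$ inside a cube around $x$ of side comparable to $mh$, where $h:=h_{X_k,X}$. Since $X$ itself is a cube, such a local selection exists (away from the boundary; elsewhere one shrinks $m$) and can be chosen quasi-uniform, so that the Lebesgue constant of polynomial interpolation of degree $m-1$ at these points is bounded by $C_0^m$. Define $a_{i_\ell}$ as the values at $x$ of the associated polynomial cardinal functions, and $a_j=0$ for all remaining indices. For fixed $\xi$ the function $y\mapsto\e^{\i(y-x)\cdot\xi}$ is entire, hence its polynomial interpolation error is controlled by the next Taylor term:
\[
\Bigl|1-\sum_j a_j\,\e^{\i(x_j-x)\cdot\xi}\Bigr|\leq C_0^m\,\frac{(cmh\,|\xi|)^m}{m!}.
\]

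Squaring, integrating against $\ud\mu$, and invoking the moment bound \eqref{eq:mudec} together with $\binom{2m}{m}\leq 4^m$ yields
\[
P_{X_k}(x)^2\leq C_0^{2m}\,\frac{(cmh)^{2m}}{(m!)^2}\,\rho^{2m}(2m)!\leq (C_1 mh)^{2m}
\]
for an $h$-independent constant $C_1>0$. Choosing $m=\lfloor 1/(eC_1 h)\rfloor$ balances the two factors and gives $P_{X_k}(x)\leq e^{-m}\leq\lambda^{1/h_{X_k,X}}$ for some $0<\lambda<1$, which completes the proof.

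The main obstacle is the construction in the middle step: for every $x\in X$ one has to locate a sufficiently regular patch of $N_m$ points of $X_k$ inside a neighbourhood of radius $O(mh)$ whose Lebesgue constant grows at most exponentially in $m$. This is precisely where the assumption that $X$ is a cube enters, and where the coupling $m\asymp 1/h$ between the interpolation degree and the fill distance is fixed; once this patch exists, the optimization producing the exponential rate $\lambda^{1/h}$ is a routine Stirling estimate.
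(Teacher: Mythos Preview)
The paper does not prove this theorem at all: it simply states the result and attributes it to Madych and Nelson~\cite{mn92} with the sentence ``The following result is proved in \cite{mn92}.'' Your sketch is precisely the Madych--Nelson power function argument from that reference, so in that sense you have reproduced the intended proof rather than compared against one the authors wrote themselves.

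As for the sketch itself, the overall strategy is correct and matches the original source. The one place where more care is needed is the step you yourself flag as the main obstacle: in~\cite{mn92} the construction of the local polynomial reproduction scheme with Lebesgue constant growing at most like $2^m$ is not done by selecting a quasi-uniform patch of $X_k$ directly, but rather via an explicit tensor-product cardinal basis on a regular grid inside the cube, followed by a density argument linking the fill distance to the grid spacing. Your formulation (``select $N_m$ nodes of $X_k$ \ldots\ quasi-uniform, so that the Lebesgue constant \ldots\ is bounded by $C_0^m$'') is the right target but would need that grid-based construction to be made rigorous; an arbitrary well-separated subset of $X_k$ does not automatically give a polynomial Lebesgue constant with exponential control in $m$.
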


\begin{remark}
  The assumption that $X$ is a cube can be generalized. Theorem~\ref{thm:RBFconv} remains valid as long as $X$ can be expressed as the union of rotations and translations of a fixed cube of side $b_0$. Actually, any ball in 
  $\R^d$ or any set X with sufficiently smooth boundary fulfills the requirements.
 \end{remark}

Elements $f\in \mathscr{C}_\kappa$ can be characterized (see~\cite{mn88, mn90}) by the existence of a function $g\in L^2_\mu$ such that
\begin{equation}\label{eq:charCk}
\hat f(\xi)\ud\xi=g(\xi)\ud \mu(\xi).  
\end{equation}
For later purposes we prove
\begin{lemma} \label{lem:expappr}
  Let $\kappa(x)=\exp(-\beta|x|^2)$  with $\beta>0$.
  Then $\kappa$ is positive definite and the measure~$\mu$ associated with $\kappa$ satisfies~\eqref{eq:mudec}.
  Furthermore, $h(x)=\exp(-\gamma |x|)$ with $\gamma>0$ belongs to
  $\mathscr{C}_\kappa$.
\end{lemma}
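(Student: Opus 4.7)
The plan is to address the three assertions separately. For the positive definiteness of $\kappa(x)=\exp(-\beta|x|^2)$, the approach is a direct Fourier computation: the Fourier transform of a centred Gaussian is $\hat\kappa(\xi)=(\pi/\beta)^{d/2}\exp(-|\xi|^2/(4\beta))>0$, so Plancherel's identity turns the double integral in the definition of positive definiteness into $(2\pi)^{-d}\int\hat\kappa(\xi)|\hat\fie(\xi)|^2\ud\xi$, which is strictly positive for every nontrivial $\fie\in C_0^\infty(\R^d)$. Bochner's theorem then identifies the associated spectral measure $\mu$ as a scalar multiple of $\hat\kappa(\xi)\ud\xi$.

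For the moment estimate \eqref{eq:mudec}, the plan is to pass to polar coordinates and reduce $\int|\xi|^k\ud\mu(\xi)$ to a one-dimensional Gaussian moment of the form $c_d\int_0^\infty r^{d+k-1}e^{-r^2/(4\beta)}\ud r$. The substitution $u=r^2/(4\beta)$ converts this into a constant multiple of $(4\beta)^{(d+k)/2}\Gamma((d+k)/2)$. It then suffices to dominate $\Gamma((d+k)/2)$ by a factorial, and Stirling's approximation readily yields $\Gamma((d+k)/2)\leq C_d\,C^k\,k!$, since the leading behaviour $(k/2)\log(k/2)$ is absorbed comfortably by $k\log k$. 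Collecting constants produces \eqref{eq:mudec} with an explicit $\rho$ depending only on $\beta$ and $d$.

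The main obstacle is the third claim $h(x)=\exp(-\gamma|x|)\in\mathscr{C}_\kappa$. By the characterization \eqref{eq:charCk}, this amounts to exhibiting a function $g\in L^2_\mu$ with $\hat h(\xi)\ud\xi=g(\xi)\ud\mu(\xi)$, which, since $\mu$ is absolutely continuous with density proportional to $\hat\kappa$, forces $g=\hat h/\hat\kappa$ up to a constant. The tool I would bring to bear is the subordination representation $e^{-\gamma|x|}=\tfrac{\gamma}{2\sqrt\pi}\int_0^\infty t^{-3/2}e^{-\gamma^2/(4t)}e^{-t|x|^2}\ud t$, which expresses $h$ as a positive mixture of Gaussians. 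Taking Fourier transforms termwise and exchanging integrals by Fubini produces an explicit integral formula for $\hat h$, and Minkowski's integral inequality applied to $\|\hat h/\hat\kappa\|_{L^2_\mu}$ should reduce the estimate to controlling a single $t$-integral of $L^2_\mu$-norms of individual Gaussians. The delicate point is that the weight $t^{-3/2}e^{-\gamma^2/(4t)}$ in the subordination must compensate the exponential growth of $1/\hat\kappa$ arising in each such norm; I expect this balance to be the main technical hurdle of the proof.
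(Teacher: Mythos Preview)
Your treatment of positive definiteness and of the moment bound \eqref{eq:mudec} is correct and essentially matches the paper, which simply records $\ud\mu(\xi)=(\pi/\beta)^{d/2}\exp(-|\xi|^2/(4\beta))\ud\xi$ and asserts \eqref{eq:mudec} without the Gamma-function details you supply.

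For the claim $h\in\mathscr{C}_\kappa$ the paper proceeds differently: it computes $\hat h$ in closed form via a Hankel transform identity taken from the Bateman tables, obtaining
\[
\hat h(\xi)=2^d\pi^{(d-1)/2}\,\Gamma\!\left(\tfrac{d+1}{2}\right)\frac{\gamma}{(\gamma^2+|\xi|^2)^{(d+1)/2}},
\]
then sets $g=\hat h/\hat\kappa$ (up to constants) and argues $g\in L^2_\mu$ by checking that $\int_0^\infty|\hat H(s)|^2 s^{d-1}\ud s<\infty$. So where you propose subordination plus Minkowski, the paper uses a direct special-function computation.

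The hurdle you flag in your last paragraph is, however, fatal rather than merely delicate. A single Gaussian $e^{-t|\cdot|^2}$ lies in $\mathscr{C}_\kappa$ iff $\int_{\R^d}\exp\!\big(-|\xi|^2/(2t)+|\xi|^2/(4\beta)\big)\ud\xi<\infty$, i.e.\ iff $t<2\beta$. The subordination integral runs over all $t>0$ with a strictly positive weight, so every $t\ge 2\beta$ contributes $+\infty$ to the Minkowski bound, and there is no cancellation to exploit. The same obstruction is present in the paper's route: the condition $g\in L^2_\mu$ reads $\int|\hat h|^2/\hat\kappa\,\ud\xi<\infty$, not the $\int|\hat h|^2\ud\xi<\infty$ that the paper actually verifies. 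Since $\hat h$ decays only polynomially while $1/\hat\kappa(\xi)$ grows like $\exp(|\xi|^2/(4\beta))$, one has $\int|\hat h|^2/\hat\kappa=\infty$, so $\exp(-\gamma|\cdot|)$ does not belong to the Gaussian native space and the third assertion cannot be established as stated.
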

\begin{proof}
  Since the Fourier transform of a Gauss function is again a Gauss function,
  the measure associated with $\kappa$ is
  \[
  \ud \mu(\xi)= \left( \frac{\pi}{\beta}\right)^{d/2} \exp\left( -\frac{|\xi|^2}{4\beta} \right)\ud\xi.  
  \]
  $\mu$ satisfies \eqref{eq:mudec}.
    Let $H(r)=\exp(-\gamma r)$ with $r=|x|$. Then $\hat h(\xi)=\hat H(s)$, where $s=|\xi|$.
    Since \[
    \hat H(s)=(2\pi)^{d/2}s^{(2-d)/2}\int_0^\infty J_{d/2-1}(sr)\,r^{d/2}\,H(r)\ud r 
    \]
    with the Bessel function $J_{d/2-1}$ of order $d/2-1$, we obtain  for the Hankel transform (cf.~\cite{be54}) that
  \begin{align*}
    \hat H(s) &= (2\pi)^{d/2}s^{(1-d)/2}\int_0^\infty r^{d/2 - 1 +1/2} \exp(-\gamma r)\, J_{d/2-1}(sr) \, (sr)^{1/2}\ud r \\
    &=  (2\pi)^{d/2}s^{(1-d)/2} \pi^{-1/2} 2^{d/2} \, \Gamma\left(\frac{d+1}{2}\right) s^{(d-1)/2} \frac{\gamma}{(\gamma^2 + s^2)^{(d+1)/2}} \\
    &= 2^d \pi^{(d-1)/2} \, \Gamma\left(\frac{d+1}{2}\right) \frac{\gamma}{(\gamma^2 + s^2)^{(d+1)/2}}
   \end{align*}
    and
    \[
    \hat h(\xi)=2^d \pi^{(d-1)/2} \, \Gamma\left(\frac{d+1}{2}\right) \frac{\gamma}{(\gamma^2 + |\xi|^2)^{(d+1)/2}}  ,
    \]
    where $\Gamma$ denotes the Gamma function.
    Defining the $L^2_\mu$-function 
    \[
    g(\xi)= 2^d \beta^{d/2} \pi^{-1/2} \, \Gamma\left(\frac{d+1}{2}\right)  \frac{\gamma}{(\gamma^2 + |\xi|^2)^{(d+1)/2}} \exp\left( \frac{|\xi|^2}{4\beta} \right)
    \] we obtain \eqref{eq:charCk}, because
  \[
\int_0^\infty |\hat H(s)|^2 s^{d-1} \ud s = 2^{2d} \pi^{d-1}  \, \Gamma^2\left(\frac{d+1}{2}\right) \gamma^2  \int_0^\infty  \frac{s^{d-1}}{(\gamma^2 + s^2)^{d+1}} \ud s < \infty.
    \]
    \end{proof}

\subsection{Application to $|x-y|^{-\alpha}$} \label{sec:appl}
We consider functions $f$ of the form
\begin{equation*}
 f(x,y) = \frac{1}{|x-y|^\alpha}, \quad \alpha > 0,
\end{equation*}
on two domains $X,Y$ satisfying
\begin{equation}\label{eq:adm}
\max\{\diam\,X,\diam\,Y\}\leq \eta\,\dist(X,Y).
\end{equation}
The validity of the latter condition will result from a partitioning of the
computational domain $\Omega\times\Omega$ induced by a hierarchical partitioning of the matrix~\eqref{eq:matA}.

Let $\kappa(x,y) = \exp(-\beta |x-y|^2)$. For fixed $y\in Y$ we interpolate $f$ with the radial
basis function
\begin{equation}\label{eq:kappip}
  p_y(x) := \sum_{i = 1}^k f(x_i,y)\, L_i^{\kappa}(x)
\end{equation}
on the data set $X_k = \{x_1, \dots, x_k\}$. Here, $L_j^{\kappa}$, $j = 1,\ldots,k$, are the Lagrange functions for $\kappa$ and $X_k$.
\begin{lemma} \label{lem:five} 
  Let $\sigma:=\dist(X,Y)$. Then for $x\in X$, $y\in Y$
  \[
  |f(x,y)-p_y(x)|\leq (c+\Lambda_k^\kappa)\left(\frac{2}{\sigma}\right)^\alpha\lambda^{1/h_{X_k,X}},
  \]
  where $\Lambda^\kappa_k:=\sup_{x\in X}\sum_{i=1}^k|L_i^\kappa(x)|$ denotes the Lebesgue constant.
\end{lemma}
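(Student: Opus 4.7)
The idea is to interpose an auxiliary function $\tilde f$ between $f$ and the interpolant $p_y$, chosen so that $\tilde f(\cdot,y)$ lies in $\mathscr{C}_\kappa$, approximates $f(\cdot,y)$ uniformly on~$X$, and permits the application of Theorem~\ref{thm:RBFconv}. The natural candidate is an exponential sum approximation of the singular factor $r^{-\alpha}$, since Lemma~\ref{lem:expappr} shows that $x\mapsto\e^{-\gamma|x-y|}$ belongs to $\mathscr{C}_\kappa$ for every $\gamma>0$.

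I would start from the Laplace representation
\[
r^{-\alpha}=\frac{1}{\Gamma(\alpha)}\int_0^\infty t^{\alpha-1}\e^{-tr}\ud t,\qquad r>0,
\]
and, after the rescaling $t\mapsto t/\sigma$ that produces the factor $2^\alpha$ in the final bound, apply a Braess--Hackbusch/Stenger-type quadrature to obtain $\tilde g(r):=\sum_{j=1}^N\omega_j\,\e^{-\gamma_jr}$ with positive weights and nodes satisfying
\[
|r^{-\alpha}-\tilde g(r)|\leq c_1\,(2/\sigma)^\alpha\lambda_0^N,\qquad r\geq\sigma,
\]
for some $\lambda_0\in(0,1)$. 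Setting $\tilde f(x,y):=\tilde g(|x-y|)$ and invoking~\eqref{eq:adm} (which yields $|x-y|\geq\sigma$ on $X\times Y$), I would choose $N$ so that $\lambda_0^N\leq\lambda^{1/h_{X_k,X}}$; this ensures the pointwise bound $|f-\tilde f|\leq c_1(2/\sigma)^\alpha\lambda^{1/h_{X_k,X}}$ on $X\times Y$. Moreover, Lemma~\ref{lem:expappr} together with the explicit formula for $\hat h$ derived in its proof gives a uniform-in-$y$ bound $\norm{\tilde f(\cdot,y)}_\kappa\leq c_2(2/\sigma)^\alpha$ via the triangle inequality applied to the finite sum.

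Let $\tilde p_y$ be the $\kappa$-interpolant of $\tilde f(\cdot,y)$ on $X_k$. Theorem~\ref{thm:RBFconv} then yields $|\tilde f(x,y)-\tilde p_y(x)|\leq\lambda^{1/h_{X_k,X}}\norm{\tilde f(\cdot,y)}_\kappa$ for all $x\in X$. Since $p_y-\tilde p_y$ is itself the $\kappa$-interpolant of the data $f(x_j,y)-\tilde f(x_j,y)$ on $X_k$, its Lagrange representation gives the Lebesgue-constant bound
\[
|p_y(x)-\tilde p_y(x)|\leq\Lambda_k^\kappa\,\max_{j=1,\dots,k}|f(x_j,y)-\tilde f(x_j,y)|\leq c_1\Lambda_k^\kappa(2/\sigma)^\alpha\lambda^{1/h_{X_k,X}}.
\]
Summing the three contributions through the triangle inequality $|f-p_y|\leq|f-\tilde f|+|\tilde f-\tilde p_y|+|\tilde p_y-p_y|$ and absorbing $c_1,c_2$ into a single constant~$c$ produces the asserted estimate.

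The main obstacle is the joint calibration of the exponential sum: its pointwise error on $[\sigma,\infty)$ and its $\mathscr{C}_\kappa$-norm must simultaneously scale like $(2/\sigma)^\alpha$, while $N$ must be large enough to match the RBF convergence rate $\lambda^{1/h_{X_k,X}}$. The prefactor $(2/\sigma)^\alpha$, rather than a bare $\sigma^{-\alpha}$, should emerge cleanly from the change of variables $t\mapsto t/\sigma$ that normalizes the quadrature to the reference interval $[1,\infty)$, so that no ad hoc estimate of the constants is needed.
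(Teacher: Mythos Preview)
Your plan is essentially the paper's own proof: exponential-sum approximation of $r^{-\alpha}$ via Braess--Hackbusch, RBF interpolation of each exponential term through Theorem~\ref{thm:RBFconv} and Lemma~\ref{lem:expappr}, a Lebesgue-constant bound on $p_y-\tilde p_y$, and the triangle inequality to combine. Two places where the paper sharpens what you wrote: (i) it uses \emph{both} inequalities in~\eqref{eq:adm} to confine $t=|x-y|/\sigma$ to the bounded interval $[1,1+2\eta]$ rather than $[1,\infty)$, which is what the Braess--Hackbusch estimate actually covers and what makes the factor $(2/\sigma)^\alpha$ appear; (ii) for the $\mathscr{C}_\kappa$-norm of the exponential sum the paper does not use the triangle inequality over the $N$ terms (that constant would grow with $N$, hence with $1/h_{X_k,X}$), but instead dominates every $\|h_{j,y}\|_\kappa$ by a single $\|h^*\|_\kappa$ and controls $\sum_j\omega_j$ separately via $\sum_j\omega_j\leq \e^{\gamma_*}g_r(1)$---this is exactly the ``joint calibration'' you correctly flagged as the main obstacle.
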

\begin{proof}
  Functions of type $f$ are not covered by Theorem~\ref{thm:RBFconv}. Therefore, we
  additionally employ \textit{exponential sum approximations}
  \[
  g_r(t):=\sum_{j=1}^r \omega_j \exp(-\gamma_j \, t)
  \]
  of $g(t):=t^{-\alpha}$ with finite~$r$ on the interval $[1,R]$ in order to approximate~$f$. According to \cite{BrHa09}, there are coefficients~$\omega_j,\gamma_j>0$
  such that
  \[
  \norm{g-g_r}_{L^\infty[1,R]}\leq 8\cdot 2^\alpha \exp\left(-\frac{\pi^2 r}{\log(8R)}\right).
  \]
   Choosing $r$ such that
  \[
    8\exp\left(-\frac{\pi^2 r}{\log(8+16\eta)}\right)=\lambda^{1/h_{X_k,X}}
  \]
  and $R=1+2\eta$, \eqref{eq:adm} implies for $x\in X$ and $y\in Y$
  \[
  1\leq t:= \frac{|x-y|}{\sigma}\leq \frac{\diam\,X+\sigma+\diam\,Y}{\sigma}\leq 1+2\eta=R.
  \]
  Letting $h_{j,y}(x)=\sigma^{-\alpha}\exp(-\gamma_j|x-y|/\sigma)$, we obtain
\[
|f(x,y)-\sum_{j=1}^r\omega_j h_{j,y}(x)|
=\sigma^{-\alpha}|g(t)-g_r(t)|\leq  8\left(\frac{2}{\sigma}\right)^\alpha \exp\left(-\frac{\pi^2 r}{\log(8+16\eta)}\right).
\]
According to Theorem~\ref{thm:RBFconv} and Lemma~\ref{lem:expappr}, the functions $h_{j,y}$ can be
interpolated using the radial basis function $\kappa$ 
on the data set $X_k = \{x_1, \dots, x_k\}$, i.e.
\[
\norm{h_{j,y}-\tilde h_{j,y}}_{\infty, X} \leq \lambda^{1/h_{X_k,X}}\norm{h_{j,y}}_\kappa,
\]
where \[
 \tilde h_{j,y}(x) = \sum_{i = 1}^k h_{j,y}(x_i)\, L_i^{\kappa}(x).
\]
Let $h^*(x):=\sigma^{-\alpha}\sup_{y\in Y}\exp(-\beta_*|x-y|)$, where $\beta_*:=\min_{j=1,\dots,r}\gamma_j/\sigma$. From
\begin{equation*}
(h_{j,y}, \fie)_{L^2}^2 \leq (h^*,\fie)_{L^2}^2\leq \norm{h^*}_\kappa^2\int_{\R^d\times\R^d} \kappa(x-z)\,\fie(x)\,\overline{\fie(z)}\ud x\ud z, \quad 1 \leq j \leq r,
\end{equation*}
for all $\fie\in C_0^\infty(\R^d)$ we obtain that $\norm{h_{j,y}}_\kappa\leq\norm{h^*}_\kappa$.
Hence,
\[
  \norm{\sum_{j=1}^r \omega_jh_{j,y}-\sum_{j=1}^r\omega_j\tilde h_{j,y}}_{\infty, X} \leq \lambda^{1/h_{X_k,X}}\norm{h^*}_\kappa \sum_{j=1}^r\omega_j.
\]
Notice that $\sum_{j=1}^r \omega_j\leq \e^{\gamma_*}\sum_{j=1}^r \omega_j\e^{-\gamma_j}=\e^{\gamma_*}g_r(1)\leq c$, where
$\gamma_*=\max_{j=1,\dots,r}\gamma_j$.
The last step is to show that
\begin{align*}
\norm{p_y-\sum_{j=1}^r \omega_j \tilde h_{j,y}}_\infty&=\norm{\sum_{i=1}^k [f(x_i,y)-\sum_{j=1}^r \omega_j h_{j,y}(x_i)]L_i^\kappa}_\infty\\
&\leq \sup_{x\in X} \sum_{i=1}^k |f(x_i,y)-\sum_{j=1}^r \omega_j h_{j,y}(x_i)|\,|L_i^\kappa(x)|\\
& \leq 8\left(\frac{2}{\sigma}\right)^\alpha \exp\left(-\frac{\pi^2 r}{\log(8+16\eta)}\right)\Lambda^\kappa_k\\
&=\left(\frac{2}{\sigma}\right)^\alpha \lambda^{1/h_{X_k,X}}\Lambda^\kappa_k.
\end{align*}
The assertion follows from the triangle inequality.
\end{proof}

The convergence can be controlled by choosing the node $x_{k+1}$ such that
the fill distance~$h_{X_{k+1},X}$ is minimized from step~$k$ to step~$k+1$. This minimization problem can be solved efficiently, i.e.\ with logarithmic-linear complexity, with
the approximate nearest neighbor search described in~\cite{am93, am95, amnsw98}.
Since we can expect that the fill distance behaves like $h_{X_k,X} \sim k^{-1/d}$, Lemma~\ref{lem:five} shows exponential convergence of $p_y$
with respect to~$k$ provided the Lebesgue constant grows sub-exponentially.

Applying the results of the previous lemma to the remainder~$r_k$, we obtain the following result for interpolating~$f$
on $X\times Y$.
\begin{theorem}\label{thm:approxErr}
 For $y \in Y$ let $p_y$ denote the radial basis function interpolant \eqref{eq:kappip} for
 $f_y:= f(\cdot,y)=|\cdot-y|^{-\alpha}$.
Choosing $y_1,\dots,y_k\in Y$ such that
\[
  |\deta C_k^{(i)}(y)|\leq   c_M|\deta C_k|, \quad 1 \leq i \leq k, \, y \in Y,
\]
where $c_M>1$ is a constant, it holds that
\[
 |r_k(x,y)| \leq c(c_Mk+1) \,\lambda^{1/h_{X_k,X}},
\]
where $X_k := \{x_1,\ldots,x_k\}$.
\end{theorem}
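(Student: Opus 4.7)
I would begin from the Schur complement identity, which follows immediately from \eqref{eq:repsk}:
\[
r_k(x,y)\,\det C_k = \det M, \qquad M := \begin{bmatrix} C_k & w_k(y) \\ v_k(x)^T & f(x,y) \end{bmatrix}.
\]
The idea is to exploit the invariance of this determinant under elementary row operations in order to inject the radial basis function interpolants $p_y, p_{y_1}, \dots, p_{y_k}$ from \eqref{eq:kappip} into the bottom row, where they can be controlled by Lemma~\ref{lem:five}.

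Concretely, I would subtract from the last row of $M$ the linear combination $\sum_{i=1}^k L_i^\kappa(x)\,R_i$, where $R_i$ denotes the $i$-th row. Since $\sum_i L_i^\kappa(x)\,f(x_i,y) = p_y(x)$ and $\sum_i L_i^\kappa(x)\,f(x_i,y_j) = p_{y_j}(x)$, the new last row becomes
\[
\bigl(f(x,y_1) - p_{y_1}(x),\ \dots,\ f(x,y_k) - p_{y_k}(x),\ f(x,y) - p_y(x)\bigr),
\]
while $\det M$ is unchanged. Laplace expansion of $\det M$ along this row, followed by Cramer's rule, expresses each cofactor (up to sign) as a ratio of determinants of precisely the form appearing in the hypothesis. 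The outcome is a representation
\[
r_k(x,y) = \bigl[f(x,y) - p_y(x)\bigr] - \sum_{j=1}^k \gamma_j(y)\,\bigl[f(x,y_j) - p_{y_j}(x)\bigr],
\]
in which each coefficient $\gamma_j(y)$ satisfies $|\gamma_j(y)| \leq c_M$ by assumption.

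It then suffices to bound each of the $k+1$ differences on the right-hand side. Since $y$ and all $y_j$ lie in $Y$, Lemma~\ref{lem:five} provides the uniform estimate $|f(x,y) - p_y(x)|,\ |f(x,y_j) - p_{y_j}(x)| \leq (c' + \Lambda_k^\kappa)(2/\sigma)^\alpha \lambda^{1/h_{X_k,X}}$ for every $x \in X$. Combining this with the bound on the $\gamma_j(y)$ and the triangle inequality yields
\[
|r_k(x,y)| \leq (1 + c_M k)\,(c' + \Lambda_k^\kappa)\left(\frac{2}{\sigma}\right)^{\!\alpha}\,\lambda^{1/h_{X_k,X}},
\]
which reduces to $c\,(c_M k + 1)\,\lambda^{1/h_{X_k,X}}$ once the $\sigma$- and $\Lambda_k^\kappa$-dependence is absorbed into the constant $c$.

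\textbf{Main obstacle.} The delicate point is the Cramer-type bookkeeping: one must carefully identify the signed minors produced by the Laplace expansion with the determinants $\det C_k^{(i)}(y)$ appearing in the hypothesis, correctly tracking the sign $(-1)^{k-j}$ from the column permutation that moves the $w_k(y)$-column into the $j$-th position. Once this identification is in place, the rest of the argument is essentially a direct application of Lemma~\ref{lem:five} term by term.
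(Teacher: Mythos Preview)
Your proposal is correct and arrives at exactly the same identity as the paper, namely
\[
r_k(x,y) = [f_y(x) - p_y(x)] - \sum_{i=1}^k [C_k^{-1} w_k(y)]_i\,[f_{y_i}(x) - p_{y_i}(x)],
\]
after which Cramer's rule and Lemma~\ref{lem:five} finish the argument term by term. The only cosmetic difference is that the paper obtains this identity by direct matrix algebra---adding and subtracting $w_k(y)^T L^\kappa(x)$ in the expression $r_k = f - v_k^T C_k^{-1} w_k$---rather than through the Schur complement determinant and row operations, but the two derivations are equivalent and your sign/minor bookkeeping is exactly the issue the paper sidesteps by staying on the linear-algebra side.
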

\begin{proof}
Let the vector of the Lagrange functions $L_{i}^{\kappa}$, $i = 1,\ldots,k$, corresponding to the radial basis function~$\kappa$ and the nodes $x_1,\ldots,x_k$ be 
given by
\begin{equation*}
 L^{\kappa}(x) = \begin{bmatrix} L_1^{\kappa}(x) \\ \vdots \\ L_k^{\kappa}(x) \end{bmatrix}.
\end{equation*}
Using \eqref{eq:repsk}, we obtain
\begin{align*}
 r_k(x,y) &= f(x,y) - v_k(x)^T C_k^{-1} w_k(y)\\
 &= f(x,y) - w_k(y)^T L^\kappa(x) - \left[ v_k(x) - C_k L^\kappa(x)\right]^T C_k^{-1} w_k(y) \\
 &= f_y(x) - p_y(x) - \sum_{i = 1}^k [C_k^{-1} w_k(y)]_i \, [f_{y_i}(x) - p_{y_i}(x)] \\
 &= f_y(x) - p_y(x) - \sum_{i = 1}^k \frac{\deta C_k^{(i)}(y)}{\deta C_k} [f_{y_i}(x) - p_{y_i}(x)], 
\end{align*}
where the last line follows from Cramer's rule. The assertion follows from the triangle inequality and Lemma~\ref{lem:five}.
\end{proof}

\begin{remark}
  Choosing the nodes $y_1,\ldots,y_k$ according to the condition
  \[
  |r_{k-1}(x_k,y_k)| \geq |r_{k-1}(x_k,y)| \quad \textnormal{for all } y \in Y,
  \]
  which is much easier to check in practice, leads to the estimate
  \[
    |\deta C_k^{(i)}(y)| \leq 2^{k-i}|\deta C_k|,\quad 1\leq i\leq k,\,y\in Y;
  \]
  for details see \cite{Bebendorf:2008}.
\end{remark}

\section{Construction of $\mathcal{H}^2$-matrix approximations} \label{sec:H2}
The aim of this section is to construct hierarchical matrix approximations to the matrix~$A$ defined
in~\eqref{eq:matA}. To this end, we first partition the set of indices
$I\times J$, $I=\{1,\dots,M\}$ and $J=\{1,\dots,N\}$, into sub-blocks $t\times s$, $t\subset I$
and $s\subset J$, such that the associated supports
\[X_t:=\bigcup_{i\in t} \supp\,\fie_i\quad \text{and}\quad
Y_s:=\bigcup_{j\in s}  \supp\,\psi_j
\]
satisfy
\begin{equation}\label{eq:admcl1}
  \eta\,\dist(X_t,Y_s)\geq \max\{\diam\,X_t,\diam\,Y_s\},
\end{equation}
i.e.\ $Y_s\subset \op{F}_\eta(X_t)$ and $X_t\subset \op{F}_\eta(Y_s)$. Notice that from Sect.~\ref{sec:appl}
we know that the singular part~$f$ of the kernel function~$K$ in~\eqref{eq:matA} can be approximated on the pair $X_t\times Y_s$.

The usual way of constructing such partitions is based on
\textit{cluster trees}; see \cite{MR2001i:65053,Bebendorf:2008}.
A cluster tree~$T_I$ for the index set $I$ is a binary tree with root $I$, where
each $t \in T_I$ and its nonempty successors $S_I(t)=\{t',t''\}\subset T_I$ (if they exist)
satisfy $t = t' \cup t''$ and  $t' \cap t'' = \emptyset$.
We refer to $\mathcal{L}(T_I) = \{t \in T_I:S_I(t)=\emptyset\}$ as the leaves of $T_I$ and define 
\[T_I^{(\ell)} = \{t \in
T_I : \dist(t,I) = \ell \} \subset T_I,
\] where $\dist(t, s)$
is the minimum distance between $t$ and $s$ in $T_I$. 
Furthermore, \[L(T_I):=\max\{\dist(t,I),\,t\in T_I\}+1\] denotes the depth of $T_I$.

Once the cluster trees $T_I$, $T_J$ for the index sets $I$ and $J$ have been computed, a
partition~$P$ of~$I\times J$ can be constructed from it. 
A block cluster tree~$T_{I\times  J}$ is a quad-tree with root
$I\times J$ satisfying conditions analogous to a cluster tree.
It can be constructed from the
cluster trees $T_I$ and $T_J$ in the following way. Starting from the
root $I\times J\in T_{I\times J}$, let the sons of a block $t\times
s\in T_{I\times J}$ be $S_{I\times J}(t,s):=\emptyset$ if
$t\times s$ satisfies \eqref{eq:admcl1} or $\min\{|t|,|s|\}\leq \nmin$ with a given constant~$\nmin>0$.
In the remaining case, we set $S_{I\times J}(t,s):=S_I(t)\times S_J(s)$.
The set of leaves of $T_{I\times J}$ defines a partition~$P$ of~$I\times J$ and its cardinality $|P|$ is
of the order $\min\{|I|,|J|\}$; see \cite{Bebendorf:2008}. As usual, we partition $P$ into admissible and
non-admissible blocks \[
P=P_{\adm}\cup P_{\nadm},
\]
where each $t\times s\in P_{\adm}$ satisfies \eqref{eq:admcl1} and
each $t\times s\in P_{\nadm}$ is small, i.e.\ satisfies
$\min\{|t|,|s|\}\leq \nmin$.

\subsection{Uniform $\H$-matrix approximation}
Hierarchical matrices are well-suited for treating non-local operators with logarithmic-linear complexity; see~\cite{Bebendorf:2008,Bo10,WH15}.
\begin{definition}
  A matrix $A\in\R^{I\times J}$ satisfying $\rank\,A|_b\leq k$ for all $b\in P_\adm$
  is called \emph{hierarchical matrix ($\H$-matrix)} of blockwise rank at most~$k$.
\end{definition}

In order to approximate the matrix~\eqref{eq:matA} more efficiently, we employ uniform $\H$-matrices; see~\cite{MR2000c:65039}.
\begin{definition}  
  A \emph{cluster basis} $\Phi$ for the rank distribution $(k_t)_{t\in T_I}$ is a family 
   $\Phi=(\Phi(t))_{t\in T_I}$ of matrices $\Phi(t) \in \R^{t\times k_t}$.
 \end{definition}

\begin{definition}
 Let $\Phi$ and $\Psi$
  be cluster bases for $T_I$ and $T_J$. A matrix $A\in\R^{I\times J}$ satisfying
  \[
  A|_{ts}=\Phi(t) \, F(t,s) \, \Psi(s)^H\quad
  \text{for all }t\times s\in P_\adm
  \]
  with some $F(t,s)\in\R^{k_t^\Phi\times k_s^\Psi}$ is called
  \emph{uniform hierarchical matrix} for $\Phi$ and $\Psi$.
\end{definition}

The storage required for the coupling matrices~$F(t,s)$ is of the order~$k\min\{|I|,|J|\}$ if for the sake of simplicity it is assumed that $k_t\leq k$ for all $t\in T_I$. Additionally, it is not useful to choose~$k_t>|t|$.
The cluster bases~$\Phi$ and~$\Psi$ require $k[|I|L(T_I)+|J|L(T_J)]$ units of storage; see~\cite{HAKHSA00}.

In the following we employ the method from Sect.~\ref{sec:hiqr} to construct a uniform $\H$-matrix approximation to
an arbitrary block $t\times s\in P_\adm$ of matrix~\eqref{eq:matA}. Let $\varepsilon > 0$ be given and
$[x]_t=\{x^t_p,\,p\in\tau_t\}\subset X_t$ and $[v]_t=\{v^t_p,\,p\in\sigma_t\}\subset\op{F}_\eta(X_t)$ be the pivots chosen in~\eqref{eq:defrk}
 such that
 \begin{equation}\label{eq:interpolf}
  |f(x,y)-\sum_{p\in \tau_t} L^t_p(x)f(x^t_p,y)|< \eps,\quad x\in X_t,\,y\in\op{F}_\eta(X_t),
 \end{equation}
 for each cluster~$t$. Here, $L^t(x):=f(x,[v]_t)f^{-1}([x]_t,[v]_t)$ denotes the vector of Lagrange functions defined in~\eqref{eq:Lagf}.  $\tau_t$ and $\sigma_t$ denote index sets with cardinality~$k$.
 From Theorem~\ref{thm:approxErr} we know that $k\sim|\log\eps|^d$.
 Similarly, for $s\in T_J$ let $[y]_s=\{y^s_q,\,q\in\sigma_s\}\subset Y_s$ and $[w]_s=\{w^s_q,\,q\in\tau_s\}\subset\op{F}_\eta(Y_s)$ be chosen such that
 \begin{equation}\label{eq:interpolg}
  |f(x,y)-\sum_{q\in \sigma_s}f(x,y^s_q)L^s_q(y)|< \eps,\quad x\in \op{F}_\eta(Y_s),\,y\in Y_s,
 \end{equation}
 where $L^s(y):=f^{-1}([w]_s,[y]_s)f([w]_s,y)$.
For $x\in X_t$ and $y\in Y_s$ this yields the dual interpolation
 \[f(x,y)\approx\sum_{p\in \tau_t} L^t_p(x)f(x^t_p,y)\approx \sum_{p\in\tau_t,\,q\in\sigma_s} L^t_p(x)\, f(x^t_p,y^s_q) \, L^s_q(y)\]
with corresponding interpolation error
  \begin{align}
|f(x,y)-\sum_{p\in\tau_t,\,q\in\sigma_s} L^t_p(x)\,f(x^t_p,y^s_q) \,L^s_q(y)|
&\leq |f(x,y)-\sum_{p\in\tau_t} L^t_p(x)f(x^t_p,y)|+\nonumber\\
&\quad +\sum_{p\in\tau_t} |L^t_p(x)||f(x^t_p,y)-\sum_{q\in\sigma_s} f(x^t_p,y^s_q) L^s_q(y)|\nonumber\\
&\leq \eps+\eps\sum_{p\in\tau_t} |L_p^t(x)|=(1+\Lambda_k^t)\eps \label{eq:errdo}
\end{align}
and the Lebesgue constant $\Lambda_k^t\geq 1$.
We define the rank-$k^2$ matrix
\begin{equation}\label{eq:matB}
  b_{ij}=\sum_{p\in\tau_t,\,q\in\sigma_s} f(x_{p}^t,y_{q}^s) \int_{X_t}  L^t_{p}(x)\fie_i(x)\xi(x) \ud x\int_{Y_s} L^s_{q}(y)\psi_j(y) \zeta(y)\ud y
  =[\Phi(t) \, F(t,s)\, \Psi(s)^T]_{ij},
\end{equation}
where $\xi$ and $\zeta$ are the functions defined in \eqref{eq:matA1}. Notice that
both matrices \[[\Phi(t)]_{ip}:=\int_{X_t}  L^t_{p}(x)\fie_i(x) \xi(x)\ud x\quad\text{and}
\quad [\Psi(s)]_{jq}:=\int_{Y_s} L^s_q(y)\psi_j(y) \zeta(y)\ud y\]
are associated only with $t$ and $s$, respectively, and can be precomputed independently of each other.
Only the matrix $F(t,s)\in\R^{k\times k}$ with $[F(t,s)]_{pq}:=f(x^t_p,y^s_q)$ depends on both clusters $t$ and $s$.
\begin{remark}
Since the vector of Lagrange functions $L^t(x)$ has the representation $L^t(x)=C_k^{-1}v_k(x)$,
the matrices $\Phi(t)\in\R^{t\times \tau_t}$ can be found from solving the linear system
\[
  C_k \Phi(t)=[\int_{X_t} v_k(x)\fie_i(x)\xi(x)\ud x]_i.
\]
\end{remark}
With $\norm{\fie_i}_{L^1}=1=\norm{\psi_j}_{L^1}$ the Cauchy-Schwarz inequality implies
\begin{align*}
|a_{ij}-b_{ij}|&\leq \int_{Y_s}\int_{X_t} |f(x,y)-\sum_{p\in \tau_t,\,q\in\sigma_s} L_p^t(x) \, f(x_p^t,y_q^s)\,L_q^s(y)|\, |\xi(x)|\,|\fie_i(x)|\,|\zeta(y)|\,|\fie_j(y)|\ud x\ud y\\
&\stackrel{\eqref{eq:errdo}}{\leq} 2\Lambda_k^t\,\norm{\xi}_\infty\norm{\zeta}_\infty\,\eps.
\end{align*}
and thus 
\begin{equation}\label{eq:apprxAPP}
  \begin{aligned}
\norm{A|_{ts}-B}_2^2 &\leq \norm{A|_{ts}-B}_F^2=\norm{A|_{ts}-\Phi(t)\, F(t,s)\, \Psi(s)^T}_F^2=\sum_{i\in t,\,j\in s} |a_{ij}-b_{ij}|^2\\
&\leq (2\Lambda_k^t \norm{\xi}_\infty\norm{\zeta}_\infty)^2 |t||s|\eps^2.  
  \end{aligned}
\end{equation}
Notice that the computation of the double integral for a single entry of the Galerkin matrix~\eqref{eq:matA}
is replaced with two single integrals in~\eqref{eq:matB}.

\subsection{Nested bases}
In order to reduce the amount of storage
for storing the bases $\Phi$ and $\Psi$ one can establish a recursive relation
among the basis vectors. The corresponding structure are $\H^2$-matrices; see \cite{HAKHSA00,Bo10}.
This sub-structure of $\mathcal{H}$-matrices is even mandatory if a logarithmic-linear complexity is to be achieved for high-frequency Helmholtz problems.
To this end, \emph{directional $\mathcal{H}^2$-matrices} have been introduced in~\cite{BKV12}.

\begin{definition}  
 A cluster basis $U=(U(t))_{t\in T_I}$ is called \emph{nested} if for each $t\in T_I\setminus\L{T_I}$ 
  there are transfer matrices $T_{t't}\in \R^{k_{t'}\times k_t}$ such that 
  for the restriction of the matrix $U(t)$ to the rows $t'$ it holds that
  \[
    U(t)|_{t'}=U(t')\,T_{t't}\quad \text{for all }t'\in S_I(t).
  \]
\end{definition}
For estimating the complexity of storing a nested cluster basis~$U$ notice
 that the set of leaf~clusters~$\L{T_I}$
constitutes a partition of~$I$ and for each
leaf cluster~$t\in\L{T_I}$ at most $k|t|$ entries have to be stored. Hence,
$\sum_{t\in \L{T_I}} k|t|=k|I|$ units of storage are required for the
leaf matrices $U(t)$, $t\in\L{T_I}$. 
The storage required for the transfer matrices is of the order $k|I|$, too; see \cite{HAKHSA00}.
\begin{definition}\label{def:H2}
A matrix $A\in\R^{I\times J}$ is called \emph{$\H^2$-matrix} if there are nested cluster bases~$U$
and~$V$ such that for $t\times s \in P_\adm$
  \[
    A|_{ts} = U(t) \, F(t,s) \, V^H(s)
  \]
  with coupling matrices $F(t,s)\in \R^{k_t^U\times k_s^V}$.
\end{definition}
Hence, the total storage required for an $\H^2$-matrix is of the order~$k (|I|+|J|)$. 
\begin{remark}
It may be advantageous to consider only nested bases for clusters~$t$
having a minimal cardinality $n_{\min}^{\H^2}\geq \nmin$. Blocks consisting of smaller clusters are treated with $\H$-matrices.
\end{remark}

We define the matrices $U(t)\in\R^{t\times k_t}$, $t\in T_I$, by the following recursion.
If $t\in T\setminus \op{L}(T_I)$ then the set of sons~$S_I(t)$ is non-empty and we define
\[
U(t)|_{t'}=U(t')\,T^U_{t't} ,\quad t'\in S_I(t),
\]
with the transfer matrix \[T^U_{t't}:=f([x]_{t'},[v]_t)f^{-1}([x]_{t},[v]_t)\in\R^{k_{t'}\times k_t}.\]
For leaf clusters $t\in \op{L}(T_I)$ we set $U(t)=\Phi(t)$.
Similarly, we define matrices $V(s)\in\R^{s\times k_s}$, $s\in T_J$, using transfer matrices
\[
  T^V_{s's}:=f^T([w]_s,[y]_{s'})f^{-T}([w]_{s},[y]_s)\in\R^{k_{s'}\times k_s}.
\]
Then $U:=(U(t))_{t\in T_I}$ and $V:=(V(t))_{t\in T_J}$ are nested bases.

\begin{lemma}
  Assuming that $\max_{t\in T_I} \{\norm{U(t)}_F,\norm{V(t)}_F,\norm{T_{t't}^U}_F\}\leq \gamma$ and $k_t\leq k$ it holds that there exists a constant $c > 0$ such that
  \[
  \norm{A|_{ts}-U(t)\, F(t,s)\, V(s)^T}_F\leq c (L-\ell)\sqrt{|t||s|}\,\norm{\xi}_\infty\norm{\zeta}_\infty\,\eps,\quad t\times s \in P_\adm,
  \]
  where $\ell$ denotes the level of $t\times s$.
\end{lemma}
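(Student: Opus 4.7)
My plan is to induct on $L-\ell$, the depth of the subtree of $T_{I\times J}$ rooted at the block $t\times s$. The base case $L-\ell=1$ corresponds to $t$ and $s$ both being leaves, in which case $U(t)=\Phi(t)$ and $V(s)=\Psi(s)$, so the assertion reduces immediately to the uniform-$\H$ estimate \eqref{eq:apprxAPP} after choosing $c$ large enough to dominate the factor $2\Lambda_k^t$.

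For the inductive step, the central task will be to understand the restriction of $U(t)F(t,s)V(s)^T$ to a sub-block $t'\times s'$ with $t'\in S_I(t)$, $s'\in S_J(s)$. Using the nested basis identities $U(t)|_{t'}=U(t')T^U_{t't}$, $V(s)|_{s'}=V(s')T^V_{s's}$ and the explicit entry formulas $[T^U_{t't}]_{rp}=L^t_p(x^{t'}_r)$, $[T^V_{s's}]_{\ell q}=L^s_q(y^{s'}_\ell)$, I would show that the central $k\times k$ matrix $T^U_{t't}F(t,s)(T^V_{s's})^T$ has $(r,\ell)$-entry equal to
\[
\sum_{p\in\tau_t,\,q\in\sigma_s}L^t_p(x^{t'}_r)\,f(x^t_p,y^s_q)\,L^s_q(y^{s'}_\ell),
\]
i.e., the double interpolant $\sigma_{xy}$ from \eqref{eq:errdo} evaluated at $(x^{t'}_r,y^{s'}_\ell)$. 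Since admissibility of $t\times s$ together with $X_{t'}\subset X_t$ and $Y_{s'}\subset Y_s$ places the evaluation points in the correct sets, \eqref{eq:errdo} yields $T^U_{t't}F(t,s)(T^V_{s's})^T=F(t',s')+E_{t',s'}$ with entries of $E_{t',s'}$ bounded by $(1+\Lambda_k^t)\varepsilon$, hence $\|E_{t',s'}\|_F\le k(1+\Lambda_k^t)\varepsilon$. This delivers the crucial splitting
\[
[U(t)F(t,s)V(s)^T]\big|_{t'\times s'}=U(t')F(t',s')V(s')^T+U(t')E_{t',s'}V(s')^T.
\]

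The first summand is the quantity to which the inductive hypothesis applies on $t'\times s'$ (admissibility is inherited, as distances grow and diameters shrink under refinement), contributing at most $c(L-\ell-1)\sqrt{|t'||s'|}\|\xi\|_\infty\|\zeta\|_\infty\varepsilon$; the perturbation satisfies $\|U(t')E_{t',s'}V(s')^T\|_F\le \gamma^2 k(1+\Lambda_k^t)\varepsilon$ by the assumed Frobenius bounds. Assembling via the block identity $\|A|_{ts}-U(t)F(t,s)V(s)^T\|_F^2=\sum_{t',s'}\|A|_{t's'}-[U(t)F(t,s)V(s)^T]|_{t's'}\|_F^2$ and Minkowski's inequality, together with $\sum_{t'}|t'|=|t|$, $\sum_{s'}|s'|=|s|$ and $\sqrt{|t||s|}\ge1$, reduces the step to an inequality of the form $c(L-\ell-1)+c_0\le c(L-\ell)$ for a constant $c_0$ depending on $\gamma$, $k$, the branching factor and a uniform bound on $\Lambda_k^t$; choosing $c$ large enough to dominate both $c_0/(\|\xi\|_\infty\|\zeta\|_\infty)$ and the base-case constant closes the induction.

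The main obstacle is the algebraic identification in the inductive step: recognizing $T^U_{t't}F(t,s)(T^V_{s's})^T$ as the double-interpolation matrix evaluated at the child-level pivots is what lets the already-proved estimate \eqref{eq:errdo} be reused at every level, and this is precisely what forces the error to accumulate only linearly---rather than geometrically---in the remaining depth $L-\ell$. Mixed cases where only one of $t,s$ is a leaf fall to the obvious one-sided modification using \eqref{eq:interpolf} or \eqref{eq:interpolg} in place of \eqref{eq:errdo}.
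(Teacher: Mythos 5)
Your proposal is correct and follows essentially the same route as the paper: the same use of the nesting identities to split $[U(t)F(t,s)V(s)^T]|_{t's'}$ into $U(t')F(t',s')V(s')^T$ plus a perturbation $U(t')D(t',s')V(s')^T$, the same identification of $T^U_{t't}F(t,s)(T^V_{s's})^T$ as an interpolation of $F(t',s')$ at the child pivots, and the same induction on the remaining depth with the base case supplied by \eqref{eq:apprxAPP}. The only cosmetic difference is that you bound $D(t',s')$ entrywise via the two-sided estimate \eqref{eq:errdo}, whereas the paper inserts the intermediate term $T^U_{t't}F(t,s')$ and applies the one-sided estimates \eqref{eq:interpolf} and \eqref{eq:interpolg} separately; both yield the same $\mathcal{O}(\eps)$ perturbation per level and hence the linear accumulation in $L-\ell$.
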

\begin{proof}
  Let $t\in T_I\setminus \op{L}(T_I)$ and $s\in T_J\setminus \op{L}(T_J)$.
  For $t'\in S_I(t)$ and $s'\in S_J(s)$ we have
  \begin{equation}\label{eq:recU}
   \begin{aligned}
    U(t)|_{t'} F(t,s) V(s)|_{s'}^T&=U(t')T^U_{t't}F(t,s) (T^V_{s's})^T V(s')^T\\
    &=U(t')F(t',s') V(s')^T-U(t')D(t',s') V(s')^T,
  \end{aligned}
\end{equation}
  where $D(t',s'):=F(t',s')-T^U_{t't}F(t,s) (T^V_{s's})^T$.
  Using
  \[
    \norm{D(t',s')}^2_F\leq2\norm{F(t',s')-T_{t't}^UF(t,s')}_F^2+2\norm{T_{t't}^U}_F^2\norm{F(t,s')-F(t,s) (T^V_{s's})^T}_F^2,
  \]
  one observes that the previous expression consists of matrices with entries
  \[
  f(x_i,y_j)-f(x_i,[v]_t)f^{-1}([x]_{t},[v]_t)f([x]_{t},y_j),\quad i\in t',\,j\in s',
    \]
    and
    \[
  f(x_i,y_j)-f(x_i,[y]_s)f^{-1}([w]_{s},[y]_s)f([w]_{s},y_j),\quad i\in t,\,j\in s',
    \]
    which can be estimated using \eqref{eq:interpolf} and \eqref{eq:interpolg} due to $x_i\in X_t\subset\op{F}_\eta(Y_s)$ and $y_j\in Y_s\subset \op{F}_{\eta}(X_t)$.
Thus, \[\norm{D(t',s')}_F\leq \sqrt{2(1+\gamma^2)}\sqrt{|t'||s'|}\,\eps.\]
By induction we prove that $\norm{A|_{ts}-U(t) F(t,s) V(s)^T}_F\leq \gamma^2\sqrt{2(1+\gamma^2)}(L-\ell)\sqrt{|t||s|}\,\norm{\xi}_\infty\norm{\zeta}_\infty\,\eps$, where $\ell$ denotes the maximum of the levels of~$t$ and~$s$.
If both $t$ and $s$ are leaves, then $\norm{A|_{ts}-\Phi(t) F(t,s)\Psi(s)^T}\leq 2\Lambda_k^t \sqrt{|t||s|}\,\norm{\xi}_\infty\norm{\zeta}_\infty\,\eps$ due to~\eqref{eq:apprxAPP}.
From \eqref{eq:recU} we see
  \begin{align*}
    \norm{A|_{t's'}&-U(t)|_{t'} F(t,s) V(s)|_{s'}^T}_F
    \leq \norm{A|_{t's'}-U(t')F(t',s') V(s')^T}_F+\norm{U(t')D(t',s')V(s')^T}_F\\
    &\leq\gamma^2\sqrt{2(1+\gamma^2)}(L-\ell-1)\sqrt{|t'|\,|s'|}\,\norm{\xi}_\infty\norm{\zeta}_\infty\,\eps+\gamma^2\sqrt{2(1+\gamma^2)}\sqrt{|t'||s'|}\,\eps\\
    &\leq \gamma^2\sqrt{2(1+\gamma^2)}(L-\ell)\sqrt{|t'|\,|s'|}\,\norm{\xi}_\infty\norm{\zeta}_\infty\,\eps.
  \end{align*}
This shows
\begin{align*}
  \norm{A|_{ts}-U(t) F(t,s) V(s)^T}_F^2&= \sum_{t'\in S_I(t),\, s'\in S_J(s)}\norm{A|_{t's'}-U(t)|_{t'} F(t,s) V(s)|_{s'}^T}_F^2\\
  & \leq 2\gamma^4 (1+\gamma^2)(L-\ell)^2(\norm{\xi}_\infty\norm{\zeta}_\infty\,\eps)^2 \sum_{t'\in S_I(t),\, s'\in S_J(s)}|t'|\,|s'|\\
  &=2\gamma^4 (1+\gamma^2)(L-\ell)^2(\norm{\xi}_\infty\norm{\zeta}_\infty\,\eps)^2 |t||s|.
\end{align*}
The same kind of estimate holds if $t$ or $s$ is a leaf, because then $U(t)=\Phi(t)$ or $V(s)=\Psi(s)$.
\end{proof}

\section{Numerical results} \label{sec:num}
The focus of the following numerical tests lies on two problems. The first problem is an exterior boundary value problem for the Laplace equation, the second is a fractional diffusion process.
All tests compare the method presented in this article with an $\H$-matrix approximation
generated by \emph{adaptive cross approximation}~(ACA); see \cite{Bebendorf:2008}.
All computations were performed on a computer consisting of two Intel E5-2630~v4 processors.
The construction of the matrix approximation was done in parallel using 40~cores.

\subsection{Exterior boundary value problem} \label{sec:numLap}
We consider the Dirichlet boundary value problem for the Laplace equation in the exterior of the Lipschitz domain $\Omega \subset \R^3$, i.e.
\begin{equation} \label{mp}
 \begin{aligned}
  -\Delta u &= 0 \quad \text{in } \Omega^c:= \R^3 \setminus \overline{\Omega}, \\
  \gamma_0^{\textnormal{ext}}u &= g \quad \text{on } \partial\Omega,
 \end{aligned}
\end{equation}
where $\gamma_0^{\textnormal{ext}}$ denotes the exterior trace and $g$ the given Dirichlet data in the trace space $H^{1/2}(\partial\Omega)$ of the Sobolev space $H^1(\Omega^c)$. 
In order to guarantee that the problem is well-defined, we additionally assume suitable conditions at infinity.

Using the single and double layer potential operators
\[
  \op{V}\psi(x) := \int_{\partial \Omega} \psi(y)\, K(x-y) \ud s_y, \quad 
  \op{K}\phi(x) := \int_{\partial \Omega} \phi(y) \,\gamma^{\textnormal{ext}}_{1,y}K(x-y) \ud s_y,
\]
where 
\begin{equation*}
 K(x) = \frac{1}{4\pi} |x|^{-1},\quad x \in \R^3\ohne\{0\},
\end{equation*}
denotes the fundamental solution,
the solution of \eqref{mp} is given by the \textit{representation formula}
\[
u(x)=\op{V}\psi(x)-\op{K}g(x),\quad x\in\R^3\setminus\partial\Omega.
\]
The task is to compute the missing Neumann data $\psi := \gamma^{\textnormal{ext}}_{1}u \in H^{-1/2}(\partial \Omega)$ from the boundary integral equation 
\begin{equation} \label{bp}
 \op{V}\psi = \left(\frac{1}{2}\op{I}+\op{K}\right)g \quad \text{on } \partial \Omega.
\end{equation}
The unique solvability of the boundary integral equation~\eqref{bp} or (if the $L^2$-scalar product is extended to a duality between $H^{-1/2}(\partial \Omega)$ and 
$H^{1/2}(\partial \Omega)$) its variational formulation
\begin{equation*}
 \scp{\op{V}\psi}{\psi'}_{L^2(\partial\Omega)} = \scp{\left(  \frac{1}{2}\op{I}+\op{K} \right)g}{\psi'}_{L^2(\partial\Omega)}, \quad \psi'
 \in H^{-1/2}(\partial \Omega),
\end{equation*}
is a consequence of the mapping properties of the single layer potential, the coercivity of the bilinear form $\scp{\op{V}\cdot}{\cdot}_{L^2(\partial\Omega)}$ and the 
Riesz-Fischer theorem.

A Galerkin approach is used in order to compute $\psi$ numerically. To this end, 
let the set $\{\psi_{1}^0,\dots, \psi_{N}^0\}$ denote the basis of the piecewise constant functions $\mathcal{P}_{0}(\mathcal{T}) \subset H^{-1/2}(\partial\Omega)$,
where $\mathcal{T}$ is a regular partition of $\partial \Omega$ into $N$ triangles. If $g$ is replaced by some piecewise linear approximation 
\begin{equation*}
 g_h \in \mathcal{P}_{1}(\mathcal{T})= \text{span} \{\psi_{1}^1,\dots, \psi_{M}^1\},
\end{equation*}
we obtain the discrete boundary integral equation $Ax = f$ with $A\in \R^{N \times N}$ and $f \in \R^N$ having the entries (see~\eqref{eq:matA})
\begin{equation*}
 \begin{split}
  a_{ij} &= \int_{\partial \Omega}\int_{\partial \Omega} K(x-y)\psi_{j}(y) \psi_{i}(x) \ud s_y \ud s_x, \quad i,j = 1,\dots,N, \\
  f_{i} &= \sum_{l = 1}^{M} g_l \scp{\left(\frac{1}{2} \op I + \op K\right) \psi_l^1}{\psi_i^0}_{L^2(\partial \Omega)}, \quad  \quad i = 1,\dots, N.
 \end{split}
\end{equation*}

\subsection*{Numerical Results} 
We choose various boundary discretizations of the ellipse $\Omega := \{ x \in \R^3: x_1^2 + x_2^2 + x_3^2/9 = 1 \}$ as the computational domain
and the Dirichlet data $ g = |x - 10 e_1|^2 $.
We compare $\H$-matrix approximations of~$A$ generated via ACA with $\H^2$-matrix approximations obtained from the method introduced in this article.
For both cases the same block cluster tree generated with $\eta = 0.8$ is used.
The minimum sizes of clusters are denoted by $\nmin$ and $\nminH$, respectively; see the remark after Definition~\ref{def:H2}.
As Table~\ref{tab:generalcompare} shows, both methods produce almost the same relative error $e_h :=\norm{u-u_h}_{L^2(\partial\Omega)} / \norm{u}_{L^2(\partial\Omega)}$,
but they differ in the time needed for computing the respective approximation of~$A$ and in the required amount of storage,
which is presented as the compression rate, i.e.\ the ratio of the amount of storage required for the approximation and the
amount of storage of the original matrix. 
\begin{table}[h!]
	\begin{center}
		\begin{tabular}{r||r|r|r|c||r|r|r|c} 
			&  \multicolumn{4}{c||}{$\H$-matrix ACA} & \multicolumn{4}{c}{$\H^2$-matrix ACA} \\
      \multicolumn{1}{c||}{$N$}     &   $\nmin$ &time in $s$   &  compr.\ in $\%$  & $e_h$      & $\nminH$  & time in $s$    &  compr.\ in $\%$   & $e_h$ \\\hline
			 $10\,024$  &  $30$     &  $0.3$  &  $16.6$  & $8.0e-4$  &  $ 400$              &  $  0.4$ & $16.0$    & $8.2e-4$ \\
			 $40\,096$  &  $60$     &  $1.5$  &  $6.1$   & $3.5e-4$  &  $ 600$              &  $  1.4$ & $ 5.5$    & $3.5e-4$ \\
			$160\,384$  &  $60$     &  $6.8$  &  $1.9$   & $2.5e-4$  &  $1000$              &  $  6.2$ & $ 1.5$    & $2.8e-4$ \\
			$641\,536$  &  $60$     & $32.2$  &  $0.6$   & $1.9e-4$  &  $1000$              &  $ 22.9$ & $ 0.4$    & $2.2e-4$
		\end{tabular}
		\caption{Comparison between $\H$- and $\H^2$-matrix adaptive cross approximation} 
		\label{tab:generalcompare}
	\end{center}
\end{table}

The time for the construction of the matrix approximation decreases the more blocks are approximated with the $\H^2$-matrix
method. While for a small number of degrees of freedom~$N$ the $\H$-matrix method is faster
than the $\H^2$-matrix method, the latter requires nearly $30\%$
less CPU time for the finest discretization.
Figures~\ref{fig:Lap_AH} and \ref{fig:Lap_AH2} give a deeper insight.
\begin{figure}[h]\centering
  \begin{minipage}[b]{.4\linewidth} 
     \includegraphics[width=\linewidth]{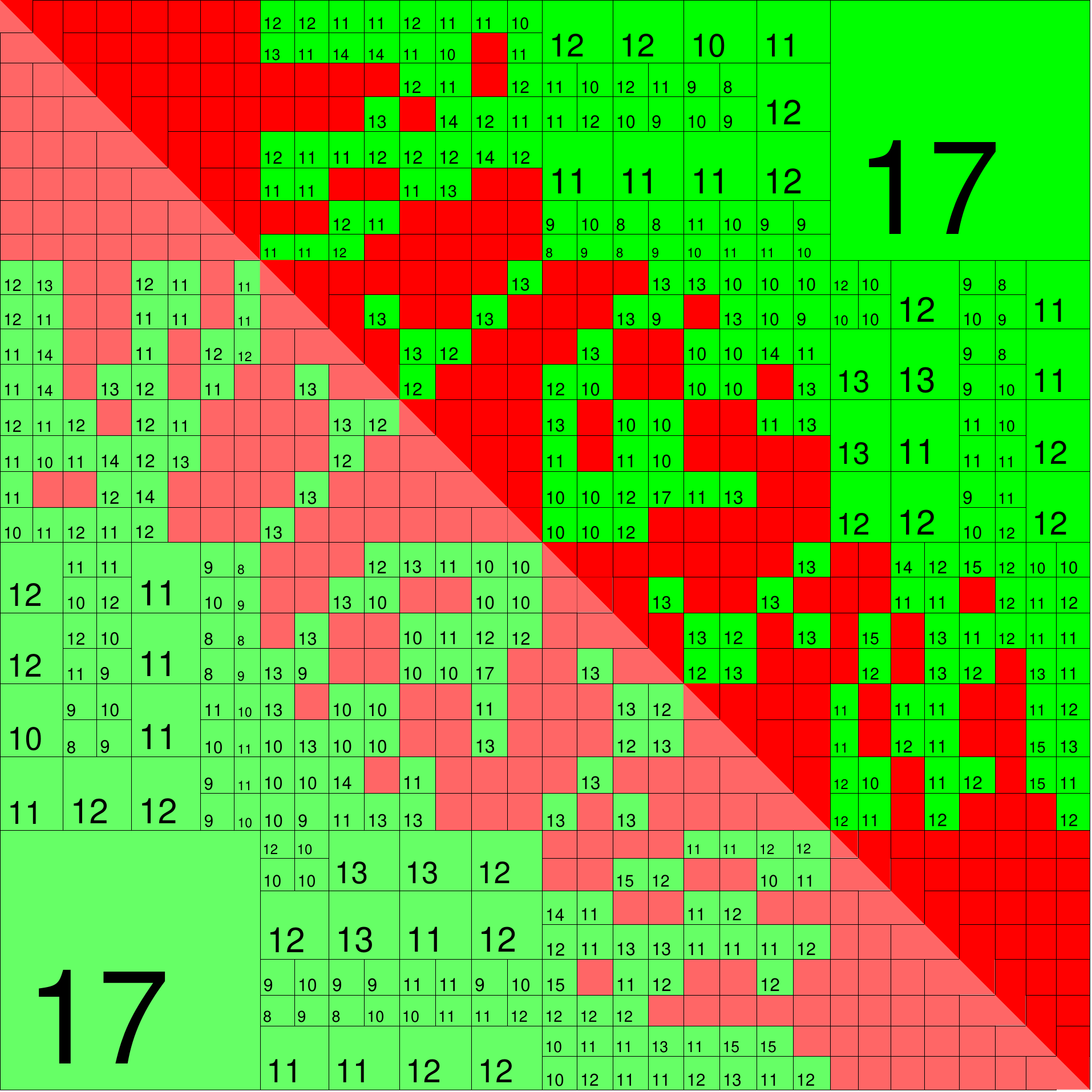}
     \caption{$A_\H$ for $N=2\,506$}
     \label{fig:Lap_AH}
  \end{minipage}
  \hspace{.1\linewidth}
  \begin{minipage}[b]{.4\linewidth} 
     \includegraphics[width=\linewidth]{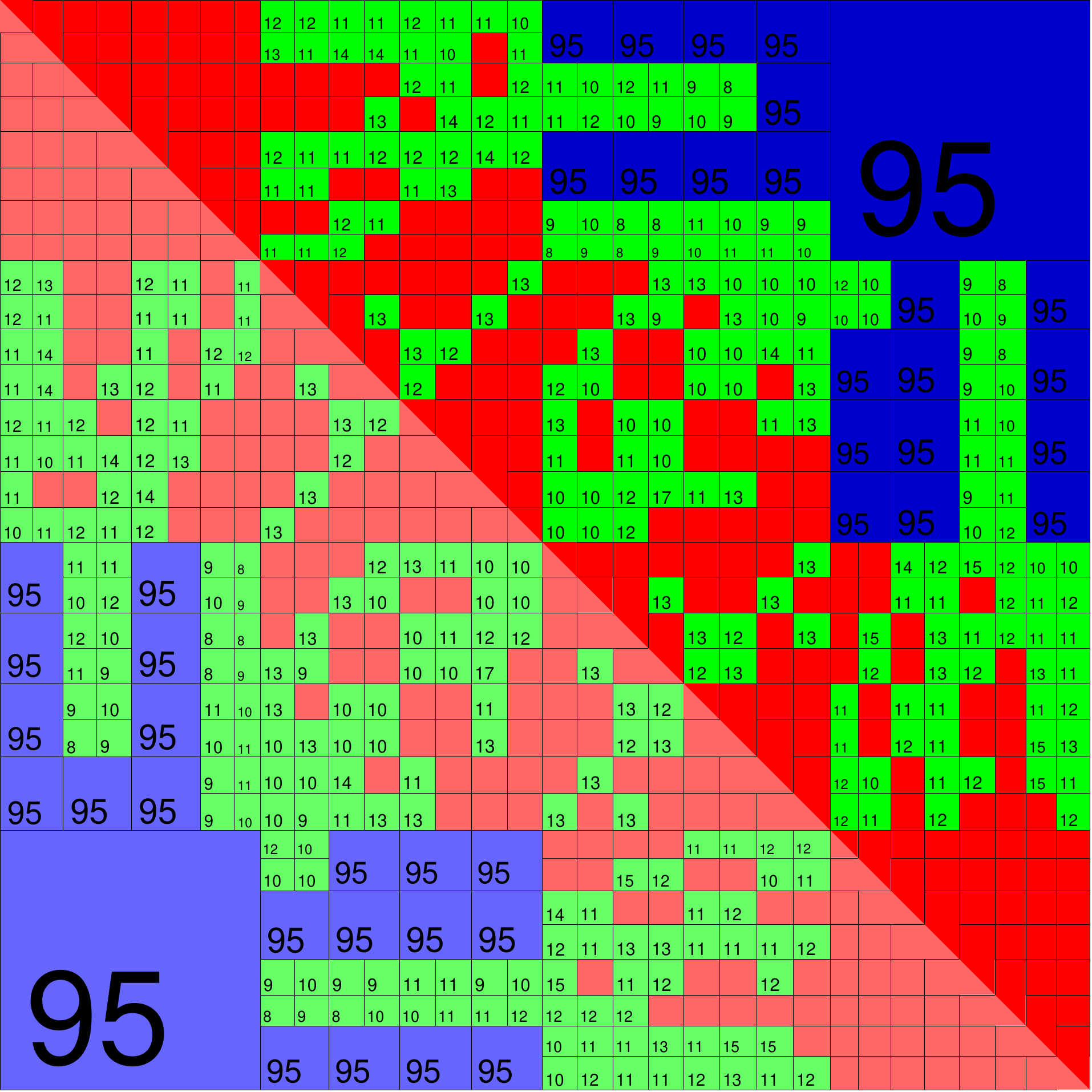}
     \caption{$A_{\H^2}$ for $N=2\,506$}
     \label{fig:Lap_AH2}
  \end{minipage} 
\end{figure}
Figure~\ref{fig:Lap_AH} shows the matrix $A$ for a coarse discretization which was approximated as an $\H$-matrix. 
Green blocks are admissible and were generated by low-rank approximation. 
The  numbers displayed in the blocks show the approximation rank~$k_\H$.
Red blocks are not admissible and were generated entry by entry.
In Figure~\ref{fig:Lap_AH2}, $A$ was approximated as an $\H^2$-matrix.
The meaning of green and red blocks is the same as in Figure~\ref{fig:Lap_AH}, the blue blocks
were generated using the $\H^2$-approximation. 
Obviously, there are several additional blocks that could be approximated with the $\H^2$-method. 
These are, however, omitted due to their size in order improve the storage requirements.

Table~\ref{tab:timecompare} shows the portion of time required for the precalculations and the time for constructing the matrix.
For the small examples the time required for the precalculation is relatively high compared to the total time 
and there are only few blocks which are approximated with the $\H^2$-method. 
Therefore the precalculations can hardly be exploited
and there is only a marginal time difference when setting up the matrices with the two methods.
The number of $\H^2$-blocks increases as the number of degrees of freedom~$N$ increases.
In this situation, the precalculations can be used more often. 
As a result, setting up the matrix with the $\H^2$-method becomes faster than with the $\H$-method.
\begin{table}[h!]
	\begin{center}
		\begin{tabular}{r||r||r|r}
                                    & \multicolumn{1}{c||}{$\H$-Matrix}  & \multicolumn{2}{c}{$\H^2$-Matrix} \\
      \multicolumn{1}{c||}{$N$}     & \multicolumn{1}{c||}{$A_{\H}$}     & \multicolumn{1}{c|}{precalculations} & \multicolumn{1}{c}{$A_{\H^2}$}    \\ \hline
			 $10\,024$  &   $0.3$ s    &   $0.1$   s  &    $0.3$ s               \\
			 $40\,096$  &   $1.5$ s    &   $0.2$   s  &    $1.2$ s               \\
			$160\,384$  &   $6.8$ s    &   $0.9$   s  &    $5.2$ s               \\
			$641\,536$  &  $32.2$ s    &   $2.6$   s  &   $20.4$ s                                      
		\end{tabular}
		\caption{Time comparison between $\H$- and $\H^2$-matrices }
		\label{tab:timecompare}
	\end{center}
\end{table}

Concerning the amount of storage, the new construction of $\H^2$-matrix approximations is
more efficient also for small numbers of degrees of freedom~$N$ as can be seen from Table~\ref{tab:memorycompare}.
\begin{table}[h]
	\begin{center}
		\begin{tabular}{r||r|r||r|r}
			&  \multicolumn{2}{c||}{$\H$-matrix} & \multicolumn{2}{c}{$\H^2$-matrix}   \\
			\multicolumn{1}{c||}{$N$}        &   memory in MB   &  compr.\ in $\%$  &  memory in MB  &  compr.\ $\%$ \\ \hline
			$10\,024$   &      $64$  &  $16.6$  &     $61$ & $16.0$     \\
			$40\,096$   &     $372$  &  $ 6.1$  &    $337$ & $ 5.5$     \\
			$160\,384$  &  $1\,824$  &  $ 1.9$  & $1\,474$ & $ 1.5$     \\
			$641\,536$  &  $8\,666$  &  $ 0.6$  & $5\,987$ & $ 0.4$    
		\end{tabular}
		\caption{Memory comparison between $\H$- and $\H^2$-matrices }
		\label{tab:memorycompare}
	\end{center}
\end{table}
The larger $N$ becomes, the more efficient is the new method.
This cannot directly be seen from the compression rates, which compare the respective
approximation with the dense matrix. However, inspecting the actual storage requirements,
one can see that the storage benefit actually improves.
For the finest discretization more than $30\%$ of storage (i.e.\ more than $2.6$~GB) are saved.

\subsection{Fractional Poisson problem}
Let $\Omega \subset \R^d$ be a Lipschitz domain, $s\in(0,1)$, and $g \in H^r(\Omega)$, $r>-s$.
We consider the fractional Poisson problem
\begin{equation}\label{eq:fracL}
\begin{aligned} 
   (-\Delta)^s u &= g \quad\text{in } \Omega, \\
               u &= 0 \quad\text{on } \R^d \backslash \Omega,
\end{aligned}
\end{equation}
where the \textit{fractional Laplacian} (see~\cite{AcBo2017}) is defined as
\begin{align*}
   (-\Delta)^s u (x) = c_{d,s}\, p.v. \int_{\R^d} \frac{u(x)-u(y)}{\lvert x-y\rvert^{d+2s}} \ud y, \quad  c_{d,s} := \frac{2^{2s}\Gamma(s+d/2)}{\pi^{d/2}\Gamma(1-s)}.
\end{align*}
Here, $s$ is called the order of the fractional Laplacian, $\Gamma$ is the Gamma function, and p.v.\ denotes the Cauchy principal value of the integral.
The solution of this problem is searched for in the Sobolev space
\[
 H^s(\Omega) = \{ v \in L^2(\Omega): |v|_{H^s(\Omega)} < \infty \},
\]
where
\begin{align*}
   |v|_{H^s(\Omega)}^2 =  \int_\Omega\int_\Omega \frac{[v(x)-v(y)]^2}{|x-y|^{d+2s}} \ud x \ud y
\end{align*}
denotes the Slobodeckij seminorm. The space $H^s(\Omega)$ is a Hilbert space, equipped with the norm
\begin{align*}
   \norm{v}_{H^s(\Omega)} = \norm{v}_{L^2(\Omega)} + |v|_{H^s(\Omega)}.
\end{align*}
Zero trace spaces $H_0^s(\Omega)$ can be defined as the closure of $C_0^\infty(\Omega)$ with respect to the $H^s$-norm.

Due to the non-local nature of the operator, we need to define the space of the test functions
\[
   \tilde H^s(\Omega) =\{ u \in L^2(\Omega):\tilde u \in H^s(\R^d)\},
\]
where  $\tilde u $ denotes the extension of $u$ by zero:
\[
  \tilde u(x) = \begin{cases}
                          u(x), & x \in \Omega, \\
                          0,    & x \in \R^d \backslash \Omega.
                        \end{cases}
\]
$\tilde H^s(\Omega)$ is also the closure of $C_0^\infty(\Omega)$ in $H^s(\R^d)$; see~\cite[Chap.~3]{McLean}.
It is known (see \cite{AiCl2018}) that $\tilde H^s(\Omega) = H_0^s(\Omega)$ for $s \neq 1/2$,
and for $s=1/2$ it holds that $\tilde H^{1/2}(\Omega) \subset H_0^{1/2}(\Omega)$.

The weak formulation of \eqref{eq:fracL} is to find $u\in \tilde H^s(\Omega)$ satisfying
\[
   a(u,v) = (g,v)_{L^2(\Omega)}, \quad v \in \tilde H^s(\Omega),
\]
where
\[
   a(u,v) =  \frac{c_{d,s}}{2} \int_\Omega \int_\Omega \frac{[u(x) - u(y)]\,[v(x)-v(y)]}{ |x-y|^{d+2s}} \ud x\ud y
           + \frac{c_{d,s}}{2s} \int_\Omega u(x)\,v(x) \int_{\partial \Omega} \frac{(y-x)^T\, n_y}{|x-y|^{d+2s}} \ud s_y \ud x.
\]
Then $\tilde H^s(\Omega)$ can be equipped with the energy norm
\[
  \lVert u \rVert_{\tilde H^s(\Omega)} =  |u|_{H^s(\R^d)} = \sqrt{a(u,u)}.
\]

Let the set $\{\varphi_{1},\dots, \varphi_{N}\}$ denote the basis of the space of piecewise linear functions~$V(\mathcal{T})$,
where $\mathcal{T}$ is a regular partition  of $\Omega$ into $M$ tetrahedra and $N$ inner points. 
The Galerkin method yields the discrete fractional Poisson problem $Ax=f$ with $A \in \R^{N \times N}$, $f \in \R^N$ having the entries
\begin{equation*}
 \begin{split}
  a_{ij} &=\quad   \frac{c_{d,s}}{2} \int_\Omega \int_\Omega \frac{[\varphi_i(x) - \varphi_i(y)]\,[\varphi_j(x)-\varphi_j(y)]}{ |x-y|^{d+2s}} \ud x\ud y \\
         &\quad  + \frac{c_{d,s}}{2s} \int_\Omega \fie_i(x)\,\fie_j(x) \int_{\partial \Omega} \frac{(y-x)^T\, n_y}{|x-y|^{d+2s}} \ud s_y \ud x. \quad i,j = 1,\dots,N, \\
  f_{i} &= (g,\fie_i)_{L^2(\Omega)}, \quad i = 1,\dots, N.
 \end{split}
\end{equation*}
If the supports of the basis functions $\fie_i$ and $\fie_j$ are disjoint, 
the computation of the entry~$a_{ij}$ simplifies to
\[
   a_{ij} = -c_{d,s} \int_\Omega \int_\Omega \frac{\fie_i(x) \fie_j(y)}{ |x-y|^{d+2s}} \ud x\ud y.
\]
Thus, admissible blocks $t\times s$ (which satisfy $\dist(X_t,X_s)>0$) are of type \eqref{eq:matA} and 
can be approximated by the method presented in this article. We remark that the
singular part $f(x,y)=|x-y|^{d+2s}$ due to its fractional exponent
is not covered by the theory of this article. Nevertheless the following numerical results show that the method works
and a theory for fractional exponents will be presented in a forthcoming article.

\subsection*{Numerical results}
The general setup and our approach is the same as in the first example in Sect.~\ref{sec:numLap}.
We compare two types of $\H$-matrix approximations of $A$ using the same block cluster tree generated with $\eta = 0.8$.
The first one is generated via ACA and the second one is an $\H^2$-matrix approximation obtained from the method introduced in this article.
Due to the Galerkin approach, we choose various volume discretizations of the ellipse $\Omega := \{ x \in \R^3: x_1^2 + x_2^2 + x_3^2/9 = 1 \}$  as the computational domain, 
the Dirichlet data $ g \equiv 1 $ and the order of the fractional Laplacian $s= 0.2$.

Since no analytical solution is known for this geometry, 
we cannot directly verify the accuracy of the numerical solution $u_h$.
Instead, we test the quality of $A_{\mathcal{H}}$ and $A_{\mathcal{H}^2}$ when applying them to a special vector.
For this purpose, we take advantage of the fact that the constant functions are in the kernel of the fractional Laplacian.
This also applies to the discrete version, the stiffness matrix~$A$.
Hence, in the following we use $e_h := \norm{ A\, \boldsymbol{1}}_2 / \sqrt{N},\, \boldsymbol{1}=[1,\dots,1]^T\in \R^N$, as
a measure of the quality of the approximations~$A_{\mathcal{H}}$ and~$A_{\mathcal{H}^2}$.

Table~\ref{tab:generalcompare_FracLap} shows the minimum sizes of the respective clusters $\nmin$ and $\nminH$ and the corresponding
numerical results, the time needed for the respective approximation of~$A$, the compression rate and the error~$e_h$.
\begin{table}[htb]
	\begin{center}
		\begin{tabular}{r||r|r|r|r||r|r|r|r} 
			          & \multicolumn{4}{c||}{$\H$-matrix ACA} & \multicolumn{4}{c}{$\H^2$-matrix ACA} \\
			\multicolumn{1}{c||}{$N$}    & $\nmin$  & time in $s$  &  compr. in $\%$  & \multicolumn{1}{c||}{$e_h$}   
			                             & $\nminH$ & time in $s$  &  compr. in $\%$  & \multicolumn{1}{c}{$e_h$} \\ \hline
			$  7\,100$  &    $30$   &      $53.4$ & $36.7$ & $2.5e-3$  &  $100$  &  $    46.6$ & $32.9$   & $2.5e-3$ \\
			$ 62\,964$  &    $60$   &  $1\,455.1$ & $11.6$ & $3.1e-4$  &  $200$  &  $1\,208.4$ & $10.1$   & $3.2e-4$ \\
			$528\,747$  &    $60$   & $28\,680.7$ & $ 2.4$ & $3.9e-5$  &  $200$  & $20\,261.9$ & $ 1.7$   & $4.4e-5$ 		
		\end{tabular}
		\caption{Comparison between $\H$- and $\H^2$-matrix adaptive cross approximation} 
		\label{tab:generalcompare_FracLap}
	\end{center}
\end{table}
As in the first example, 
the time for the construction of the matrix approximation decreases the more blocks are approximated with the $\H^2$-matrix method
and for the finest discretization the CPU time for approximating $A$ is reduced by almost $30\%$.
Here however, even for a small number of degrees of freedom $N$ the $\H^2$-method is faster.
There are two reasons for this.
The first is shown in Table \ref{tab:timecompare_FracLap}.
The cost of the precalculations is only a small fraction of the cost of the approximation of~$A$.
This is because $A$ is a dense matrix whose entries are significantly more expensive to calculate than in the first example.
\begin{table}[htb]
	\begin{center}
		\begin{tabular}{r||r||r|r}	
			                       &  \multicolumn{1}{c||}{$\H$-matrix}  & \multicolumn{2}{c}{$\H^2$-matrix} \\
		\multicolumn{1}{c||}{$N$}  &  \multicolumn{1}{c||}{$A_{\H}$}     &  precalculations &  \multicolumn{1}{c}{$A_{\H^2}$}  \\ \hline
			$  7\,100$  &      $53.4$ s & $0.4$ s &      $46.2$ s \\  
			$ 62\,964$  &  $1\,455.1$ s & $0.6$ s &  $1\,207.8$ s \\
			$526\,747$  & $28\,680.7$ s & $2.8$ s & $20\,259.1$ s   
		\end{tabular}
		\caption{Time comparison between $\mathcal{H}$- and $\mathcal{H}^2$-matrices }
		\label{tab:timecompare_FracLap}
	\end{center}
\end{table} 
The second reason can be seen from Figs.~\ref{fig:FracLap_AH} and~\ref{fig:FracLap_AH2}.
These figures show the matrix~$A$ for the coarsest discretization which was approximated as an $\H$-matrix and $\H^2$-matrix, respectively. 
As in the Figs.~\ref{fig:Lap_AH} and~\ref{fig:Lap_AH2}, the red blocks were calculated entry by entry, 
the green and blue blocks are low rank approximations calculated by the ACA and the new method, respectively, 
and the number in the low-rank blocks is the rank $k_\H$ and $k_{\H^2}$, respectively.
Compared to the first example, the ranks $k_\H$ and $k_{\H^2}$ of corresponding blocks hardly differ. 
Therefore, $\nminH$ can be chosen relatively small even for a large number of degrees of freedom $N$ in order to ensure memory efficiency 
and to approximate as many blocks as possible with the $\H^2$-method.
The reason for the small value of $k_{\H^2}$ is 
that for $|x| > 1$ the kernel function $K(x)=|x|^{-d-2s}$ is quite easy to approximate due to its decaying behavior.
For a small number of degrees of freedom $N$ the condition $|x|>1$ is almost automatically guaranteed by the admissibility condition of the $\H^2$-blocks.
On the other hand, we pay for this in the time it takes to calculate $A$, 
because the cost of the singular and near-singular integrals scale with $|\log h|$ per dimension; see \cite[Chap.~4.2]{AiCl2018}. 
\begin{figure}[htb]\centering
	\begin{minipage}[b]{.4\linewidth} 
		\includegraphics[width=\linewidth]{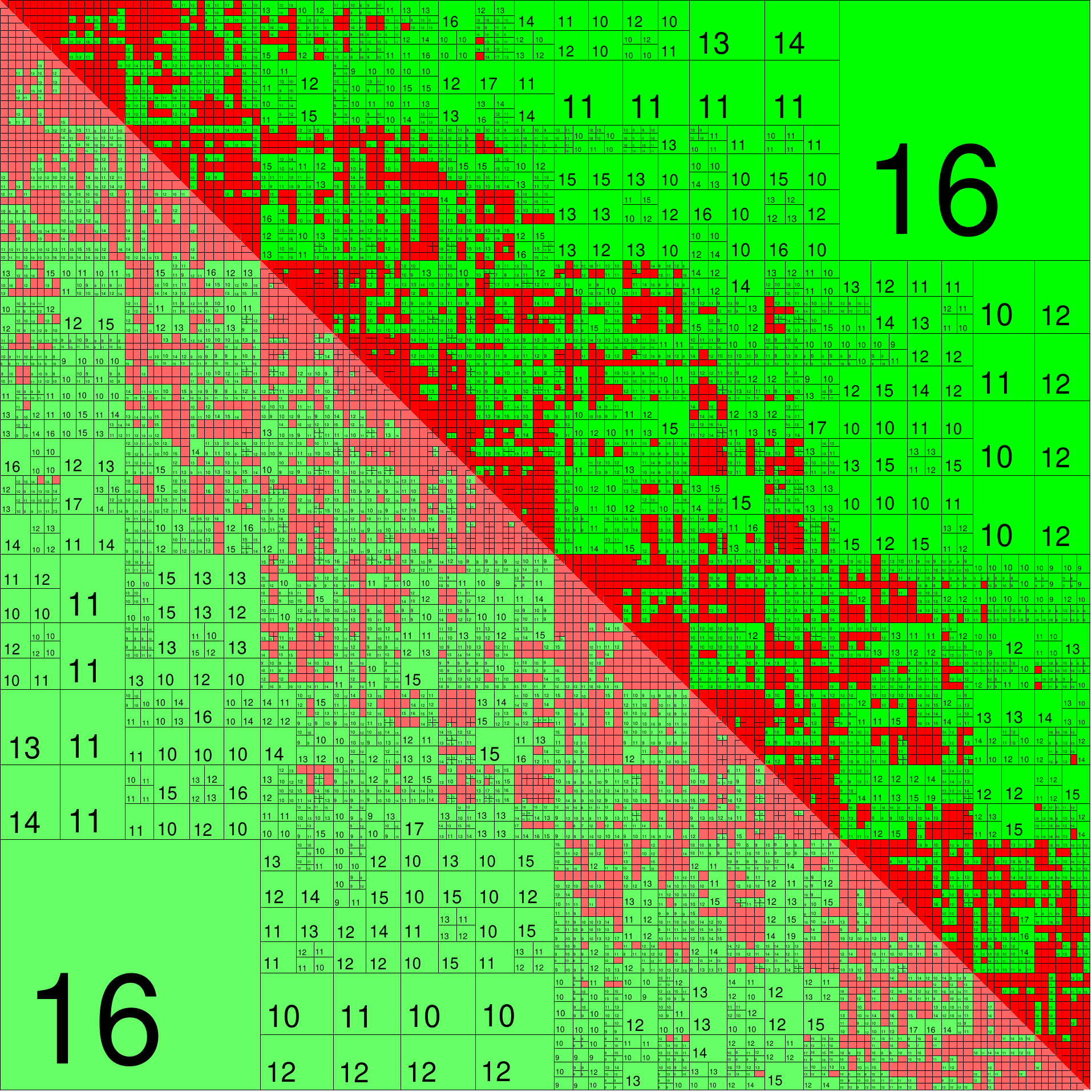}
		\caption{$A_\mathcal{H}$ for $N=7\,100$}
		\label{fig:FracLap_AH}
	\end{minipage}
	\hspace{.1\linewidth}
	\begin{minipage}[b]{.4\linewidth} 
		\includegraphics[width=\linewidth]{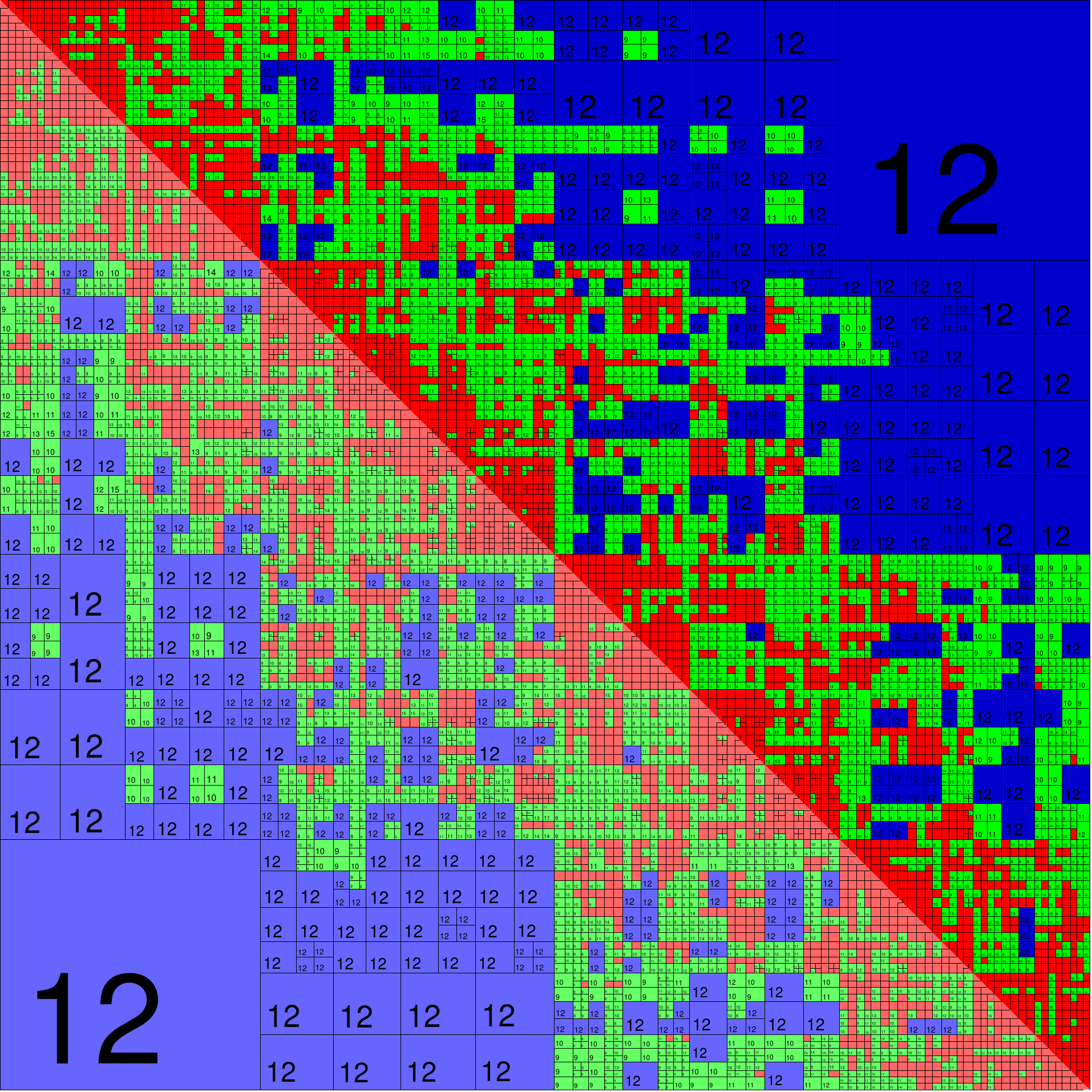}
		\caption{$A_{\mathcal{H}^2}$ for $N=7\,100$}
		\label{fig:FracLap_AH2}
	\end{minipage} 	
\end{figure}

Of course not only the CPU time benefits from the small difference between $k_\H$ and $k_{\H^2}$, but also the storage requirements as can be seen from Table~\ref{tab:memorycompare_FracLap}.
\begin{table}[h]
	\begin{center}
		\begin{tabular}{r||r|r||r|r}
			          &  \multicolumn{2}{c||}{$\H$-matrix} & \multicolumn{2}{c}{$\H^2$-matrix}  \\
			\multicolumn{1}{c||}{$N$} &  \multicolumn{1}{c|}{memory in MB} &  \multicolumn{1}{c||}{compr.\ in $\%$}
			                          &  \multicolumn{1}{c|}{memory in MB} &  \multicolumn{1}{c}{compr.\ in $\%$}   \\ \hline
			$  7\,100$  &      $71$  &  $36.7$  &      $63$ & $32.9$ \\
			$ 62\,964$  &  $1\,760$  &  $11.6$  &  $1\,527$ & $10.1$ \\
			$528\,747$  & $25\,438$  &  $ 2.4$  & $17\,993$ & $ 1.7$
		\end{tabular}
		\caption{Memory comparison between $\mathcal{H}$- and $\mathcal{H}^2$-matrices }
		\label{tab:memorycompare_FracLap}
	\end{center}
\end{table}
For each selected discretization, less storage is required when using the $\H^2$-method.
The savings are visible 
from the actual storage requirements.
For example, the finest discretization requires $30\%$ less storage (i.e.\ more than $7.4$~GB). 
In addition, the $\H^2$-approximation becomes more efficient the larger the number of degrees of freedom $N$ becomes, 
since the precalculations can be exploited for a increasingly larger part of the matrix.


\end{document}